\newtheorem{thm}{Theorem}[section]
\newtheorem{lem}[thm]{Lemma}
\newtheorem{prop}[thm]{Proposition}
\newtheorem{cor}[thm]{Corollary}
\theoremstyle{definition}
\newtheorem{ex}[thm]{Example}
\theoremstyle{remark}
\newtheorem{rem}[thm]{Remark}
\DeclareMathOperator{\sspan}{span}
\begin{document}

\title[linear relations of specific families of MZVs]{$\mathbb{Q}$-linear relations of specific families of multiple zeta values and the linear part of Kawashima's relation}

\author{Minoru Hirose}
\address[Minoru Hirose]{Faculty of Mathematics, Kyushu University 744, Motooka, Nishi-ku,
Fukuoka, 819-0395, Japan}
\email{m-hirose@math.kyushu-u.ac.jp}

\author{Hideki Murahara}
\address[Hideki Murahara]{Nakamura Gakuen University Graduate School, 5-7-1, Befu, Jonan-ku,
Fukuoka, 814-0198, Japan}
\email{hmurahara@nakamura-u.ac.jp}

\author{Tomokazu Onozuka}
\address[Tomokazu Onozuka]{Multiple Zeta Research Center, Kyushu University 744, Motooka, Nishi-ku,
Fukuoka, 819-0395, Japan}
\email{t-onozuka@math.kyushu-u.ac.jp}

\subjclass[2010]{Primary 11M32}
\keywords{Multiple zeta values, Kawashima's relation, Kawashima function, Dirichlet series}

\begin{abstract}
 In this paper, we study specific families of multiple zeta values which closely relate to the linear part of Kawashima's relation.
 We obtain an explicit basis of these families, and investigate their interpolations to complex functions.
 As a corollary of our main results, we also see that the duality formula and the derivation relation are deduced from the linear part of Kawashima's relation.
\end{abstract}

\maketitle

\section{Introduction}
 For positive integers $k_1,\ldots,k_r$ with $k_r \ge 2$, the multiple zeta values (MZVs) are defined by
\begin{align*}
 \zeta(k_1,\ldots, k_r)
 :=\sum_{1\le n_1<\cdots <n_r} \frac {1}{n_1^{k_1}\cdots n_r^{k_r}}.
\end{align*}
There exist a lot of $\mathbb{Q}$-linear relations among MZVs,
and Kawashima's relation (with the shuffle product formula) is a big family
of such relations.
One can conjecturally obtain every $\mathbb{Q}$-linear relation among MZVs
from Kawashima's relation
(if the products of MZVs are expanded by the shuffle product formula).
The linear part of Kawashima's relation is a special case of Kawashima's relation.
It does not exhaust all $\mathbb{Q}$-linear relations among MZVs,
but contains several types of well-known relations among MZVs such as the duality formula,
the derivation relation, the quasi-derivation relation, the cyclic sum formula,
the Ohno relation, and so on (see Tanaka \cite[Section 1]{Tan09}).
The purpose of this paper is to study specific families of MZVs
coming from Hoffman's algebraic setup,
and to clarify their close relationship to the linear part of Kawashima's relation.
We obtain an explicit basis of these families by using the linear part of Kawashima's relation,
and give interpolations of these families to complex functions.

We recall Hoffman's algebraic setup with a slightly different convention (see Hoffman \cite{Hof97}).
Let $\mathfrak{H}:=\mathbb{Q} \left\langle x,y \right\rangle$ be the noncommutative polynomial ring in two indeterminates $x$ and $y$.
We define the $\mathbb{Q}$-linear map $\mathit{Z}:y\mathfrak{H}x \to \mathbb{R}$
by $\mathit{Z}( yx^{k_1-1}\cdots yx^{k_r-1}):= \zeta (k_1,\ldots, k_r)$.
For  $A\in y\mathfrak{H}$ and $B\in \mathfrak{H}x$, we define a family $f(A,B)\in \mathbb{R}^{\infty}$ by
\[
 f(A,B):=(\mathit{Z}(AB),\mathit{Z}(A(x+y)B),\mathit{Z}(A(x+y)^2B), \ldots).
\]
Then, the duality formula (Example \ref{duality}) and the derivation relation (Example \ref{der}) for MZVs can be written as linear relations among $f(A,B)$'s.
We define the product $\diamond$ on $\mathfrak{H}$ by
\begin{align*}
 &\qquad\qquad w \diamond 1=1 \diamond w=w, \\
 &xw_1 \diamond xw_2 =x(w_1 \diamond xw_2) - x(yw_1 \diamond w_2), \\
 &xw_1 \diamond yw_2 =x(w_1 \diamond yw_2) + y(xw_1 \diamond w_2), \\
 &yw_1 \diamond xw_2 =y(w_1 \diamond xw_2) + x(yw_1 \diamond w_2), \\
 &yw_1 \diamond yw_2 =y(w_1 \diamond yw_2) - y(xw_1 \diamond w_2)
\end{align*}
for $w,w_1,w_2\in\mathfrak{H}$ together with $\mathbb{Q}$-bilinearity.
\begin{rem}
 We note that the product $\diamond$ is associative and commutative (see Corollary \ref{ac}).
\end{rem}
Let $\tau$ be the anti-automorphism on $\mathfrak{H}$ that interchanges $x$ and $y$.
\begin{thm}[Main theorem] \label{main1} We have the followings.
 \begin{enumerate}
  \item
   For $w_1,w_2\in\mathfrak{H}$, we have
   \[
    f(yw_1, w_2x)=f(y, (\tau(w_1) \diamond w_2)x ).
   \]
  \item
   For $w\in \mathfrak{H}$, we have
   \[
    f(y,wx)=0 \Leftrightarrow w=0.
   \]
  \item
   For $A\in y\mathfrak{H},B\in \mathfrak{H}x$, there exist unique Dirichlet series $L_1(s;A,B)$ and $L_2(s;A,B)$ such that
   \[ f(A,B)=(L_1(s;A,B)+sL_2(s;A,B))_{s=0}^\infty. \]
 \end{enumerate}
\end{thm}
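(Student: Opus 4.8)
The plan is to deduce (iii) from (i) together with a Mellin--transform representation of the sequence, so the first move is to reduce to the case $A=y$. Writing $A=yw_1$ and $B=w_2x$, part (i) gives $f(A,B)=f(y,(\tau(w_1)\diamond w_2)x)$, so it suffices to interpolate $Z(y(x+y)^swx)$ for an arbitrary $w\in\mathfrak{H}$. This reduction is what keeps the analysis tractable: with $A=y$ the left endpoint of the iterated integral contributes a single clean term, whereas a longer $A$ would introduce logarithmic (polynomial) factors that destroy the desired shape.

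Next I would set up the integral representation. Using the iterated--integral realization of $Z$ with $\omega_x=dt/t$, $\omega_y=dt/(1-t)$ and the substitution $u=\log\frac{t}{1-t}$, the symbol $x+y$ becomes the exact form $du$, so the $s$ consecutive copies of $x+y$ integrate over an ordered simplex to $(\Delta u)^s/s!$. After integrating out the inner variables one is left with
\[
 Z(y(x+y)^swx)=\frac{1}{s!}\int_0^\infty \xi^s K(\xi)\,d\xi, \qquad K(\xi)=\int_{-\infty}^\infty \tilde\alpha(u)\,\tilde\beta(u+\xi)\,du,
\]
where $\tilde\alpha(u)=\frac{e^u}{1+e^u}$ is the boundary density coming from the single letter $y$ and $\tilde\beta$ is the density attached to the bottom variable of $wx$. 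This is precisely the Mellin transform whose interpolation plays the role of the Kawashima function; as a sanity check, for $w$ empty one computes $K(\xi)=\xi/(e^\xi-1)$, giving $Z(y(x+y)^sx)=(s+1)\zeta(s+2)$.

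The heart of the matter is the shape of $K$: I would prove that $K(\xi)=\sum_{n\ge1}(c_n+d_n\xi)e^{-n\xi}$ for $\xi>0$, i.e.\ that each exponential $e^{-n\xi}$ occurs with a coefficient that is \emph{at most linear} in $\xi$. This rests on an asymptotic analysis of the two boundary densities as their argument tends to $+\infty$. Because $B=wx$ ends in $x$, a recursion on the letters of $wx$ shows that $\tilde\beta(v)$ has an expansion $\sum_{m\ge1}c_m^{\,B}e^{-mv}$ with \emph{constant} coefficients and no polynomial factors (the trailing $x$ is exactly what blocks the divergent sub--integral that would otherwise generate a power of $v$), while $\tilde\alpha(u)=\sum_{m\ge0}(-1)^me^{-mu}$ carries a single constant ($m=0$) term. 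In the convolution only this one constant term resonates with the exponentials of $\tilde\beta$, and a simple resonance produces exactly one factor of $\xi$; hence the degree in $\xi$ is at most one. For a general $A$ the density $\tilde\alpha$ would itself carry polynomial factors, producing higher powers of $\xi$, which is precisely why the reduction to $A=y$ is essential.

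Granting this shape, the conclusion is immediate from $\frac{1}{s!}\int_0^\infty\xi^se^{-n\xi}d\xi=n^{-(s+1)}$ and $\frac{1}{s!}\int_0^\infty\xi^{s+1}e^{-n\xi}d\xi=(s+1)n^{-(s+2)}$, which give, after term--by--term integration,
\[
 Z(y(x+y)^swx)=\sum_{n\ge1}\left(\frac{c_n}{n}+\frac{d_n}{n^2}\right)n^{-s}+s\sum_{n\ge1}\frac{d_n}{n^2}\,n^{-s}=L_1(s)+sL_2(s),
\]
the two stated Dirichlet series; since $c_n,d_n$ are polynomially bounded these converge in a right half--plane, and their values at $s=0,1,2,\dots$ reproduce the finite MZVs, consistently with $Z(ywx)<\infty$. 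Uniqueness follows from the asymptotic linear independence of $n^{-s}$ and $sn^{-s}$ as $s\to\infty$: reading off the coefficients of $1,\,s,\,2^{-s},\,s2^{-s},\dots$ in turn forces any representation of the zero sequence to vanish. The main obstacle I expect is making the kernel expansion rigorous together with the interchange of summation and Mellin integral: one must control $K(\xi)$ uniformly (exponential decay as $\xi\to\infty$ from the $n=1$ term, integrability as $\xi\to0^+$ inherited from $Z(ywx)<\infty$) and establish the polynomial bound on $c_n,d_n$ that is needed for $L_1$ and $L_2$ to converge.
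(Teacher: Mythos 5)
Your proposal addresses only part (iii) of the theorem, taking part (i) as an input and never mentioning part (ii). That is already a substantial gap: part (i) is the paper's main algebraic result (it is proved via Kawashima's relation together with a chain of equalities of spans involving the product $\diamond$, the map $\phi$, and the harmonic product), and part (ii) is a genuine theorem in its own right (the paper proves it by extracting the coefficients $c_2(w;N)$ from the Dirichlet series, assembling them into a combination of multiple polylogarithms, and invoking the linear independence of the ${\rm Li}_{\Bbbk}(z)$). Neither of these is something you can wave through; in particular (ii) is exactly what makes Corollary \ref{Cor:explicit_basis} work, and nothing in your Mellin-transform setup addresses it.

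Within part (iii) your route is genuinely different from the paper's (the paper reduces to $A=y$, $B=(x+y)^{k_1-1}x\cdots(x+y)^{k_r-1}x$ and computes the coefficient of $z^1$ of Yamamoto's Kawashima function via Lemma \ref{lem:star_dual}, which yields the coefficients $c_1(N)$, $c_2(N)$ as explicit finite multiple harmonic sums), and your sanity check $Z(y(x+y)^sx)=(s+1)\zeta(s+2)$ is correct. But the entire content of your argument is the claim that the kernel satisfies $K(\xi)=\sum_{n\ge1}(c_n+d_n\xi)e^{-n\xi}$ with at most linear polynomial coefficients, and this is only asserted via a heuristic ``resonance'' picture. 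As stated the argument does not go through term by term: writing $\tilde\beta(v)\sim\sum_m c_m^{\,B}e^{-mv}$ is only an asymptotic expansion at $v\to+\infty$, and the individual integrals $\int_{-\infty}^{\infty}\tilde\alpha(u)e^{-n(u+\xi)}\,du$ diverge for $n\ge 2$ because $\tilde\alpha(u)=e^u/(1+e^u)\sim e^u$ as $u\to-\infty$; so the convolution cannot simply be expanded and integrated termwise, and the bookkeeping that produces ``exactly one factor of $\xi$'' has to be reorganized (e.g.\ by subtracting partial sums of the expansion) before it proves anything. You flag this yourself as the main obstacle, which is accurate: it is not a technicality but the whole lemma. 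The uniqueness argument at the end (comparing growth of $N^{-s}$ against $s(N+1)^{-s}$ for the smallest nonvanishing index) is essentially the paper's Theorem \ref{main3-2} and is fine.
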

As a corollary of Theorem \ref{main1} (i) and (ii), we can obtain an explicit basis of these families.
\begin{cor}\label{Cor:explicit_basis}
 There exists an explicit basis
 \[ \{ f(y,u_1\cdots u_n x) \mid n\in\mathbb{Z}_{\ge0}, u_1,\ldots,u_n\in\{x,y\} \} \]
 of the $\mathbb{Q}$-linear space spanned by
 \[ \{ f(yw_1,w_2x) \mid w_1,w_2 \in\mathfrak{H} \}. \]
 In addition, all $\mathbb{Q}$-linear relations of $f(-,-)$ are spanned by the relation in Theorem \ref{main1} (i).
\end{cor}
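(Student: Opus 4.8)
The plan is to deduce everything from the three parts of Theorem \ref{main1} by pure linear algebra, the substantive work having already been done there. First I would introduce the $\mathbb{Q}$-linear map $\phi\colon\mathfrak{H}\to\mathbb{R}^\infty$ defined by $\phi(w):=f(y,wx)$; this is linear because $\mathit{Z}$ is $\mathbb{Q}$-linear and each coordinate $\mathit{Z}(y(x+y)^k wx)$ depends linearly on $w$. By Theorem \ref{main1} (i), every generator rewrites as $f(yw_1,w_2x)=\phi(\tau(w_1)\diamond w_2)$, so the space $V:=\sspan_\mathbb{Q}\{f(yw_1,w_2x)\mid w_1,w_2\in\mathfrak{H}\}$ is contained in the image $\phi(\mathfrak{H})$; conversely $\phi(w)=f(y,wx)=f(y\cdot 1,wx)$ is itself such a generator, so in fact $V=\phi(\mathfrak{H})$. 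Since the monomials $u_1\cdots u_n$ ($n\ge 0$, $u_i\in\{x,y\}$) form the standard $\mathbb{Q}$-basis of $\mathfrak{H}$, their images $f(y,u_1\cdots u_n x)$ span $V$.

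To upgrade \emph{span} to \emph{basis} I would invoke Theorem \ref{main1} (ii), which says exactly that $\phi$ is injective. An injective linear map sends a basis of its domain to a basis of its image, so $\{f(y,u_1\cdots u_n x)\}$ is a $\mathbb{Q}$-basis of $V=\phi(\mathfrak{H})$. This proves the first assertion.

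For the statement about relations, I would formalize the family $f(-,-)$ through the $\mathbb{Q}$-bilinear map $\beta\colon\mathfrak{H}\otimes_\mathbb{Q}\mathfrak{H}\to\mathbb{R}^\infty$, $\beta(w_1\otimes w_2):=f(yw_1,w_2x)$, which is well defined by the bilinearity of $f$ in its two slots, so that a $\mathbb{Q}$-linear relation among the $f(yw_1,w_2x)$ is precisely an element of $\ker\beta$. Let $q:=m\circ(\tau\otimes\mathrm{id})\colon\mathfrak{H}\otimes\mathfrak{H}\to\mathfrak{H}$, where $m$ is the (bilinear, hence descending to $\otimes$) multiplication for $\diamond$; thus $q(w_1\otimes w_2)=\tau(w_1)\diamond w_2$. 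Theorem \ref{main1} (i) is exactly the identity $\beta=\phi\circ q$, and since $\phi$ is injective one gets $\ker\beta=\{\xi:q(\xi)\in\ker\phi\}=\ker q$. Writing $R$ for the subspace spanned by the relations of Theorem \ref{main1} (i), i.e. by the elements $w_1\otimes w_2-1\otimes(\tau(w_1)\diamond w_2)$, the task reduces to proving $\ker q=R$.

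The inclusion $R\subseteq\ker q$ is immediate from $\tau(1)=1$ and $1\diamond z=z$. For the reverse inclusion, observe that modulo $R$ every simple tensor satisfies $w_1\otimes w_2\equiv 1\otimes(\tau(w_1)\diamond w_2)$, so any $\xi\in\mathfrak{H}\otimes\mathfrak{H}$ is congruent to $1\otimes q(\xi)$ modulo $R$; if $\xi\in\ker q$ then $q(\xi)=0$, whence $\xi\in R$. This yields $\ker q=R$, so $\ker\beta=R$ and all relations are spanned by those of Theorem \ref{main1} (i). The argument is essentially routine once the theorem is in hand; the only point requiring genuine care is the bookkeeping of the bilinear/tensor formalism, in particular verifying that the injectivity of $\phi$ from (ii) really forces $\ker\beta=\ker q$ rather than merely $\ker\beta\supseteq\ker q$, which is what lets the single relation scheme in (i) account for \emph{every} linear relation.
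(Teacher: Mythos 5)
Your argument is correct and follows the paper's own route: Theorem \ref{main1} (i) reduces every generator to the form $f(y,wx)$ so the monomial images span, Theorem \ref{main1} (ii) gives injectivity and hence linear independence, and your tensor-product computation $\ker\beta=\ker q=R$ is exactly the paper's proof that $A_5\subset A_3$ in Theorem \ref{Thm:equivalence}, merely rephrased. The only cosmetic issue is that you reuse the symbol $\phi$ for the map $w\mapsto f(y,wx)$, which clashes with the automorphism $\phi(x)=x+y$, $\phi(y)=-y$ already defined in Section 2.
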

\begin{rem}
 The Dirichlet series $L_1(s;A,B)$ and $L_2(s;A,B)$ can be continued meromorphically to the whole plane $\mathbb{C}$ and explicitly written by using the multiple zeta functions (for details, see Section 3).
\end{rem}
\begin{rem} \label{Rem:equivalence}
 Theorem \ref{main1} (i) is essentially equivalent to the linear part of Kawashima's relation (see Kawashima \cite{Kaw09}).
 This equivalence will be shown in Section 2.
\end{rem}

As a consequence of Corollary \ref{Cor:explicit_basis} and Remark \ref{Rem:equivalence}, we obtain the next theorem.

\begin{thm}[{A special case $A_6 \subset A_1$ of Theorem \ref{Thm:equivalence}}] \label{rem1}
 If
 \[
 \sum_{A,B} n_{A,B} f(A,B)=0,
 \]
 then the relation $\sum_{A,B} n_{A,B} \mathit{Z} (AB)=0$ is deduced from the linear part of Kawashima's  relation.
\end{thm}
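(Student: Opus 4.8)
The plan is to read off Theorem~\ref{rem1} as a formal consequence of the structural results already assembled, namely Corollary~\ref{Cor:explicit_basis} together with the equivalence announced in Remark~\ref{Rem:equivalence}. The key observation is that the first coordinate of the family $f(A,B)$ is exactly $\mathit{Z}(AB)$: since $f(A,B)=(\mathit{Z}(AB),\mathit{Z}(A(x+y)B),\ldots)$, any $\mathbb{Q}$-linear relation among the families $f(-,-)$ specializes, in its $s=0$ component, to a $\mathbb{Q}$-linear relation among the corresponding MZVs. The content of the theorem is that this specialized relation already lies inside the linear part of Kawashima's relation, and the work is to organize the decomposition so that this is visible.

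First I would fix the formal bookkeeping. Work in the free $\mathbb{Q}$-vector space $V$ on symbols $[A,B]$ with $A\in y\mathfrak{H}$, $B\in\mathfrak{H}x$, and let $\mu\colon V\to y\mathfrak{H}x$ be the $\mathbb{Q}$-linear map $[A,B]\mapsto AB$; note that $AB\in y\mathfrak{H}x$ is always admissible, so $\mathit{Z}(AB)$ is defined, and the hypothesis $\sum_{A,B}n_{A,B}f(A,B)=0$ yields in particular $\sum_{A,B}n_{A,B}\mathit{Z}(AB)=0$. By the final sentence of Corollary~\ref{Cor:explicit_basis}, the space of all $\mathbb{Q}$-linear relations among the $f(-,-)$ is spanned by the relations furnished by Theorem~\ref{main1}~(i). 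Since $\sum_{A,B}n_{A,B}[A,B]$ lies in this relation space, it can be written, as an element of $V$, in the form
\[
 \sum_{A,B}n_{A,B}[A,B]=\sum_{j}c_j\bigl([yw_1^{(j)},w_2^{(j)}x]-[y,(\tau(w_1^{(j)})\diamond w_2^{(j)})x]\bigr)
\]
for suitable $c_j\in\mathbb{Q}$ and $w_1^{(j)},w_2^{(j)}\in\mathfrak{H}$.

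Next I would apply $\mu$ to both sides, obtaining
\[
 \sum_{A,B}n_{A,B}\,AB=\sum_{j}c_j\bigl(yw_1^{(j)}w_2^{(j)}x-y(\tau(w_1^{(j)})\diamond w_2^{(j)})x\bigr)
\]
in $y\mathfrak{H}x$. Each summand on the right is the image under $\mu$ of a single Theorem~\ref{main1}~(i) relation, and by the equivalence of Remark~\ref{Rem:equivalence} (to be established in Section~2, where the relevant statement is the inclusion $A_6\subset A_1$ of Theorem~\ref{Thm:equivalence}) every such element $yw_1w_2x-y(\tau(w_1)\diamond w_2)x$ becomes, after applying $\mathit{Z}$, an instance of the linear part of Kawashima's relation. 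A $\mathbb{Q}$-linear combination of instances of the linear part of Kawashima's relation is again deduced from it, so the displayed identity exhibits $\sum_{A,B}n_{A,B}\mathit{Z}(AB)=0$ as a consequence of the linear part of Kawashima's relation, which is exactly the assertion.

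The genuine difficulty is not in this deduction but in the inputs it consumes. The main obstacle is the equivalence of Remark~\ref{Rem:equivalence}: matching the $\diamond$-product relation of Theorem~\ref{main1}~(i) with Kawashima's original formulation requires translating the recursive definition of $\diamond$ into the operations underlying Kawashima's relation (via the Kawashima function and its Newton-series coefficients) and checking that the two families of relations agree. Once that dictionary is in place, and granted the spanning statement of Corollary~\ref{Cor:explicit_basis}—which itself rests on Theorem~\ref{main1}~(i) and (ii)—the present theorem follows formally by the chain above.
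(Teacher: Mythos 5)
Your argument is correct and follows essentially the paper's own route: the paper proves this statement as the chain $A_6\subset A_5\subset A_3=A_4=A_1$ in Theorem \ref{Thm:equivalence}, using Theorem \ref{main1} (i) and (ii) to rewrite $\sum_{A,B} n_{A,B}\,AB$ as a combination of elements $yw_1w_2x-y(\tau(w_1)\diamond w_2)x$ and then Lemma \ref{key1} to identify these with (linear combinations of) elements $\phi(u\ast v)x$ --- which is exactly your decomposition, with the spanning statement of Corollary \ref{Cor:explicit_basis} playing the role of the direct appeal to (i) and (ii). One small correction: the input you need at the final step is the inclusion $A_4\subset A_1$ from Lemma \ref{key1} (proved in Section 2), not ``$A_6\subset A_1$'' as you cite it, since the latter is precisely the statement being proved and would make the argument circular as written.
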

%

Let us see some examples of $\mathbb{Q}$-linear relations among $f(-,-)$'s,
and check what Theorem \ref{rem1} says for those examples.
\begin{ex}[Duality formula] \label{duality}
 The duality formula for MZVs is described as $\mathit{Z}(w)=\mathit{Z}(\tau(w)) \, (w \in y\mathfrak{H}x)$.
 From this, for $A\in y\mathfrak{H}$ and $B\in \mathfrak{H}x$, we have
 \[
  f(A,B)=f(\tau(B),\tau(A)).
 \]
 Thus, by Theorem \ref{rem1}, the duality formula is deduced from the linear part of Kawashima's relation.
 This fact is already known (see Kawashima \cite[Section 7]{Kaw09}).
\end{ex}
\begin{ex}[Derivation relation; Ihara-Kaneko-Zagier \cite{IKZ06}] \label{der}
 Let $l$ be a positive integer.
 A derivation $\partial$ on $\mathfrak{H}$ is a $\mathbb{Q}$-linear map $\partial\colon\mathfrak{H}\to\mathfrak{H}$ satisfying Leibniz's rule $\partial(ww^{\prime})=\partial(w)w^{\prime}+w\partial(w^{\prime})$.
 We define the derivation $\partial_l$ on $\mathfrak{H}$ by $\partial_l(x):=y(x+y)^{l-1}x$ and $\partial_l(y):=-y(x+y)^{l-1}x$,
 and we note that $\partial_l(x+y)=0$.
 Then, the derivation relation for MZVs is described as
 \[ \mathit{Z}(\partial_l (w))=0 \quad (w\in y\mathfrak{H}x). \]
 From this i.e.\ $\mathit{Z}(\partial_l (A(x+y)^sB))=0$, we have
 \[
  f(\partial_l (A),B)+f(A,\partial_l (B))=0
 \]
 for $A\in y\mathfrak{H}$ and $B\in \mathfrak{H}x$.
 Thus, by Theorem \ref{rem1}, the derivation relation is
 deduced from the linear part of Kawashima's relation.
 This fact is already known (see Tanaka \cite{Tan09}).
\end{ex}

\section{Proof of Theorem \ref{main1} (i)}
We define an automorphism $\phi$ on $\mathfrak{H}$ by $\phi (x):=x+y$ and $\phi (y):=-y$.
We note that $\phi\circ\phi=\rm{id}$.
For $w, w_1, w_2 \in \mathfrak{H}$, the harmonic product $\ast$ on $\mathfrak{H}$ is defined by
\begin{align*}
 1\ast w &= w \ast 1 = w, \\
 x w_1 \ast w_2 &= w_1 \ast x w_2 = x(w_1 \ast w_2), \\
 y w_1 \ast y w_2 &= y (w_1 \ast y w_2 + y w_1 \ast w_2 + x (w_1 \ast w_2))
\end{align*}
together with $\mathbb{Q}$-bilinearity.
\begin{rem}
 The definition of the above harmonic product $\ast$ coincides with the one given by Sato and the first-named author in \cite{HS18}.
 The harmonic product $\ast$ is associative and commutative.
 We also note that the definition of the usual harmonic product (see Hoffman \cite{Hof97}) is naturally derived from the above definition.
\end{rem}
\begin{thm}[Kawashima's relation; Kawashima \cite{Kaw09}] \label{kawashima_lin}
 For $w_1, w_2 \in y\mathfrak{H}$, we have
 \begin{align*}
  \mathit{Z}(\phi (w_1 \ast w_2)x)=0.
 \end{align*}
\end{thm}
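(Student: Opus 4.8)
The plan is to follow Kawashima's function-theoretic strategy: interpolate the relevant values by a single one-variable function and exploit that the harmonic product $\ast$ computes products of truncated multiple harmonic sums. Attach to the letters the differential forms $\omega_x = \frac{dt}{t}$ and $\omega_y = \frac{dt}{1-t}$, so that $\mathit{Z}(vx)$ is the usual iterated-integral representation of the corresponding multiple zeta value. Since $\phi$ is an algebra automorphism and iterated integrals are multilinear in their forms, I would first rewrite, for $w = u_1\cdots u_k \in y\mathfrak{H}$,
\[
\mathit{Z}(\phi(w)x) = \int_0^1 G_w(t)\,\frac{dt}{t}, \qquad G_w(t) := \int_{0<t_1<\cdots<t_k<t} \prod_{i=1}^k \phi(\omega_{u_i})(t_i),
\]
where $\phi$ sends forms by $\omega_x\mapsto \frac{dt}{t(1-t)}$ and $\omega_y\mapsto -\frac{dt}{1-t}$. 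This Kawashima function obeys the recursion $\frac{d}{dt}G_{w'u}(t) = G_{w'}(t)\rho_u(t)$ with $\rho_x = \frac{1}{t(1-t)}$ and $\rho_y = \frac{-1}{1-t}$, so that $G_w$ is a convergent power series whose coefficients are explicit rational combinations of multiple harmonic sums; the integral converges because every word of $\phi(w)x$ begins with $y$ and ends with $x$.

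The engine of the proof is the multiplicativity
\[
S_N(w_1\ast w_2) = S_N(w_1)\,S_N(w_2), \qquad S_N(yx^{a_1-1}\cdots yx^{a_r-1}) := \sum_{0<n_1<\cdots<n_r\le N} \frac{1}{n_1^{a_1}\cdots n_r^{a_r}},
\]
which I would prove by induction on the word length using the defining recursion of $\ast$: the three terms $y(w_1\ast yw_2)$, $y(yw_1\ast w_2)$ and $yx(w_1\ast w_2)$ record the cases $n_1<n_1'$, $n_1>n_1'$ and $n_1=n_1'$ in the product of the two smallest-index contributions, the diagonal term carrying the extra $x$ because equal indices merge their exponents. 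Combined with the coefficient recursion for $G_w$, this shows that the series $G_{w_1\ast w_2}$, and hence the single integral $\int_0^1 G_{w_1\ast w_2}(t)\frac{dt}{t}$, is controlled entirely by the pointwise products $S_N(w_1)S_N(w_2)$.

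It remains to prove that this integral vanishes, and this is where I expect the real difficulty to lie. The idea is to analyse $G_{w_1\ast w_2}(t)$ as $t\to1$ through its Newton (finite-difference) series: the multiplicativity above turns the coefficient sequence into a genuine product, and the value $\int_0^1 G_{w_1\ast w_2}(t)\frac{dt}{t}$ is then a distinguished linear functional of that product, which one shows is forced to be zero. The reflection $t\mapsto 1-t$, under which $\phi(\omega_y)\mapsto \frac{dt}{t}$ and $\phi(\omega_x)\mapsto -\frac{dt}{t(1-t)}$, is the natural symmetry to bring to bear here, since it is what produces the duality already visible in the simplest case $\mathit{Z}(\phi(y\ast y)x)=\zeta(1,2)-\zeta(3)=0$. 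The main obstacle is to make this last step precise across all depths — because $\phi$ interchanges strict and non-strict truncated sums and so mixes depths, the bookkeeping is delicate — and to justify rigorously the interchange of summation and integration together with the convergence of the Newton series at the endpoint $t=1$.
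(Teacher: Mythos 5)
First, note that the paper does not prove this statement at all: it is imported verbatim from Kawashima \cite{Kaw09}, so there is no internal proof to compare against. Judged as a standalone proof, your proposal has a genuine gap, and it is exactly where you say you expect it: the vanishing. Your first two steps --- the iterated-integral representation of $\mathit{Z}(\phi(w)x)$ with the forms $\frac{dt}{t(1-t)}$ and $-\frac{dt}{1-t}$, and the multiplicativity $S_N(w_1\ast w_2)=S_N(w_1)S_N(w_2)$ of truncated non-strict/strict harmonic sums under this version of $\ast$ --- are correct and can be made rigorous (your case analysis on the smallest indices is the right bookkeeping for a product $\ast$ that peels letters from the left). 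But these steps are routine; the entire content of the theorem is the final vanishing, and for that you offer only the phrase ``a distinguished linear functional of that product, which one shows is forced to be zero,'' without identifying the functional or the mechanism that kills it. The reflection $t\mapsto 1-t$ is a red herring: it explains the depth-one example via duality, but Kawashima's relation is not a duality statement and this symmetry does not generalize.

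The missing mechanism is the following, and it cannot be skipped. One interpolates $N\mapsto S_N(w)$ by a Newton series $F_w(z)$ (the Kawashima function, cf.\ Lemmas \ref{thm:Fk_at_N}--\ref{thm:Kawashima_series} of this paper), proves that $F_{w_1\ast w_2}(z)=F_{w_1}(z)F_{w_2}(z)$ holds as an identity of analytic functions --- agreement at the non-negative integers via your multiplicativity is not enough; one needs a uniqueness theorem for Newton series with controlled growth (Carlson-type) to upgrade it --- and then observes two facts: (a) the coefficient of $z^1$ in the Taylor expansion of $F_w(z)$ at $z=0$ is precisely $\mathit{Z}(\phi(w)x)$ (this is the content of Lemma \ref{thm:z1coeff}, in the $S$-normalization), and (b) $F_{w_1}(0)=S_0(w_1)=0$ and $F_{w_2}(0)=0$ for $w_1,w_2\in y\mathfrak{H}$, so the product $F_{w_1}F_{w_2}$ vanishes to order at least $2$ at $z=0$ and its $z^1$ coefficient is zero. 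Your writeup contains neither the function-level identity (with its interpolation-uniqueness input), nor the identification (a) of the linear functional, nor the order-two-zero observation (b); without all three there is no reason for $\int_0^1 G_{w_1\ast w_2}(t)\frac{dt}{t}$ to vanish. As it stands the proposal is a correct setup followed by an acknowledged placeholder where the theorem should be.
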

\begin{prop} \label{x+y}
 For $w_1,w_2\in\mathfrak{H}$, we have
 \[
  (x+y)w_1\diamond w_2 = w_1\diamond (x+y)w_2 = (x+y)(w_1 \diamond w_2).
 \]
\end{prop}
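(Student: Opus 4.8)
The plan is to derive both equalities directly from the recursive definition of $\diamond$, reducing everything to a case analysis on the leading letter of one argument. Since the asserted identities are $\mathbb{Q}$-bilinear in $(w_1,w_2)$, it suffices throughout to treat monomial arguments, and to split according to whether the relevant factor equals $1$, lies in $x\mathfrak{H}$, or lies in $y\mathfrak{H}$.

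First I would establish the left equality $(x+y)w_1 \diamond w_2 = (x+y)(w_1 \diamond w_2)$. Writing $(x+y)w_1 \diamond w_2 = xw_1 \diamond w_2 + yw_1 \diamond w_2$, I split on $w_2$. The case $w_2 = 1$ is immediate from $w \diamond 1 = w$. If $w_2 = xw_2'$, I apply the rules for $xw_1 \diamond xw_2'$ and $yw_1 \diamond xw_2'$ and observe that the two occurrences $\mp\, x(yw_1 \diamond w_2')$ cancel, leaving exactly $x(w_1 \diamond xw_2') + y(w_1 \diamond xw_2') = (x+y)(w_1 \diamond w_2)$. The case $w_2 = yw_2'$ is symmetric, with the pair $\pm\, y(xw_1 \diamond w_2')$ cancelling. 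It is worth stressing that this first equality needs no induction: the cancellation already occurs at the top level of the recursion.

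For the right equality $w_1 \diamond (x+y)w_2 = (x+y)(w_1 \diamond w_2)$ I would argue by induction on $\deg w_1$, splitting on the leading letter of $w_1$. For $w_1 = xw_1'$ (resp. $w_1 = yw_1'$) I expand $w_1 \diamond xw_2 + w_1 \diamond yw_2$ using the four product rules; after stripping the common leading letter, the remaining identity reads $w_1' \diamond (x+y)w_2 - yw_1'\diamond w_2 = xw_1' \diamond w_2$ (resp. its $y$-analogue). This follows by combining the induction hypothesis applied to $w_1'$ with the already-proved first equality in the form $(x+y)w_1' \diamond w_2 = (x+y)(w_1' \diamond w_2)$. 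Alternatively, once commutativity of $\diamond$ is in hand this equality is immediate from the first; but to keep the proposition self-contained (commutativity being recorded only later) I would give the inductive argument.

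The main obstacle is purely bookkeeping rather than conceptual: the four defining rules carry opposite signs (a minus in the $xx$ and $yy$ cases, a plus in the two mixed cases), and the entire proof hinges on matching these signs so that precisely the intended pairs of terms cancel. There is no deep difficulty, but a single sign error destroys the cancellation, so the one step demanding care is the verification that, in each case, the leftover terms recombine into $(x+y)(w_1 \diamond w_2)$.
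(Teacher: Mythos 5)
Your proposal is correct and follows essentially the same route as the paper: the first equality is obtained by a direct top-level cancellation in the defining recursion, and the second by induction on $\deg w_1$ with a case split on its leading letter, feeding in the induction hypothesis together with the already-established first equality. The only difference is presentational (you spell out the cancellation the paper calls ``evident'' and state the residual identity explicitly), so nothing needs to change.
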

\begin{proof}
 The equality $(x+y)w_1\diamond w_2 = (x+y)(w_1\diamond w_2)$ is evident by the definition of the product $\diamond$.
 Then, we prove $(x+y)w_1 \diamond w_2 = w_1\diamond (x+y)w_2$ by induction on the degree of $w_1$.
 Due to the symmetry of the definition, we need to prove only for $w_1$ starting with $x$.
 By the definition and the induction hypothesis, we have
 \begin{align*}
  &xw_1 \diamond (x+y)w_2 \\
  &=x(w_1 \diamond (x+y)w_2) - x(yw_1 \diamond w_2) +y(xw_1 \diamond w_2) \\
  &=x((x+y)w_1 \diamond w_2) - x(yw_1 \diamond w_2) +y(xw_1 \diamond w_2) \\
  &=(x+y)(xw_1 \diamond w_2) \\
  &=(x+y)xw_1 \diamond w_2.
 \end{align*}
 This finishes the proof.
\end{proof}
\begin{lem} \label{harmonic_diamond}
 For $w_1, w_2 \in \mathfrak{H}$, we have
 \begin{align*}
  \phi (w_1 \ast w_2)=\phi(w_1) \diamond \phi(w_2).
 \end{align*}
\end{lem}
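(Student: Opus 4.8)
My plan is to prove the identity by induction on the total degree $\deg w_1 + \deg w_2$. Since both $\ast$ and $\diamond$ are $\mathbb{Q}$-bilinear and $\phi$ is $\mathbb{Q}$-linear, it suffices to treat the case where $w_1$ and $w_2$ are monomials, and I would then split into cases according to their leading letters: peel one letter off the left using the defining rules for $\ast$, push $\phi$ through, and reassemble using the rules for $\diamond$. The base case is $w_1=1$ or $w_2=1$, where $\phi(1\ast w_2)=\phi(w_2)=1\diamond\phi(w_2)=\phi(1)\diamond\phi(w_2)$ because $\phi(1)=1$, and symmetrically for $w_2=1$. Throughout I would use the images $\phi(x)=x+y$, $\phi(y)=-y$, and the consequence $\phi(yx)=\phi(y)\phi(x)=-y(x+y)$.

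The cases in which at least one factor begins with $x$ are the easy ones, and the crucial tool here is Proposition \ref{x+y}. For example, if $w_1=xw_1'$, then $w_1\ast w_2=x(w_1'\ast w_2)$, so applying $\phi$ and the induction hypothesis gives $\phi(w_1\ast w_2)=(x+y)\phi(w_1'\ast w_2)=(x+y)(\phi(w_1')\diamond\phi(w_2))$. On the other side $\phi(w_1)=(x+y)\phi(w_1')$, and Proposition \ref{x+y} lets me pull the factor $x+y$ outside the $\diamond$-product, so $\phi(w_1)\diamond\phi(w_2)=(x+y)\phi(w_1')\diamond\phi(w_2)=(x+y)(\phi(w_1')\diamond\phi(w_2))$, which agrees. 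The remaining subcase, where $w_1$ begins with $y$ but $w_2=xw_2'$, is handled in exactly the same way via $w_1\ast xw_2'=x(w_1\ast w_2')$ and Proposition \ref{x+y}.

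The only substantial case is when both $w_1=yw_1'$ and $w_2=yw_2'$ begin with $y$. Here I would expand by the third rule for $\ast$, namely $yw_1'\ast yw_2'=y(w_1'\ast yw_2')+y(yw_1'\ast w_2')+yx(w_1'\ast w_2')$, apply $\phi$ while tracking the signs $\phi(y)=-y$ and $\phi(yx)=-y(x+y)$, and invoke the induction hypothesis on the three lower-degree products. Writing $P:=\phi(w_1')$ and $Q:=\phi(w_2')$, this produces $\phi(w_1\ast w_2)=y(P\diamond yQ)+y(yP\diamond Q)-y(x+y)(P\diamond Q)$. On the $\diamond$ side, $\phi(w_1)\diamond\phi(w_2)=(-yP)\diamond(-yQ)=yP\diamond yQ$, and the fourth rule for $\diamond$ gives $yP\diamond yQ=y(P\diamond yQ)-y(xP\diamond Q)$. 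Comparing, the terms $y(P\diamond yQ)$ cancel, and it remains to verify $-y(xP\diamond Q)=y(yP\diamond Q)-y(x+y)(P\diamond Q)$, which rearranges to $y((x+y)P\diamond Q)=y(x+y)(P\diamond Q)$; this is once again precisely Proposition \ref{x+y}.

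I expect the sign bookkeeping in this last case to be the main obstacle: because $\phi$ mixes $x$ and $y$ and carries the minus sign $\phi(y)=-y$, the three terms from the harmonic rule must recombine with exactly the right signs, and the entire computation turns on recognizing that the leftover discrepancy is the content of Proposition \ref{x+y}. Everything else reduces to the correct case division by leading letters and straightforward applications of the induction hypothesis.
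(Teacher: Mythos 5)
Your proposal is correct and follows essentially the same route as the paper: induction with base case $w_1=1$ or $w_2=1$, a case split on the leading letters, and Proposition \ref{x+y} as the tool that absorbs the factor $x+y$ (equivalently, recombines the $xP$ and $yP$ terms) in each case. Your sign bookkeeping in the $y$--$y$ case matches the paper's displayed identity $\phi(y)\bigl(\phi(w_1)\diamond\phi(yw_2)+\phi(yw_1)\diamond\phi(w_2)+\phi(xw_1)\diamond\phi(w_2)\bigr)=\phi(yw_1)\diamond\phi(yw_2)$, just written out in more detail.
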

\begin{proof}
 We prove by induction on the number of indeterminates of $w_1$ and $w_2$.
 When $w_1=1$ or $w_2=1$, the lemma trivially holds.
 By Proposition \ref{x+y}, the definitions of the products $\ast$ and $\diamond$, and the induction hypothesis, for $u_1,u_2\in\{x,y\}$ and $w_1,w_2\in\mathfrak{H}$, we find
 \begin{align*}
  &\phi(u_1w_1*u_2w_2)\\&=
  \begin{cases}
   \phi(x) (\phi(w_1) \diamond \phi(u_2w_2)) &(u_1=x), \\
   \phi(x) (\phi(yw_1) \diamond \phi(w_2)) &(u_1=y,u_2=x), \\
   \phi(y) (\phi(w_1) \diamond \phi(yw_2) + \phi(yw_1) \diamond \phi(w_2) + \phi(xw_1)\diamond\phi(w_2)) &(u_1=u_2=y)
  \end{cases} \\
  &=\phi(u_1w_1) \diamond \phi(u_2w_2). \qedhere
 \end{align*}
\end{proof}
Since the harmonic product $\ast$ is associative and commutative, we easily see the following corollary holds.
\begin{cor} \label{ac}
 The product $\diamond$ is associative and commutative.
\end{cor}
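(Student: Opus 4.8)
The plan is to transport the associativity and commutativity of $\ast$ across the map $\phi$ by means of Lemma \ref{harmonic_diamond}. The crucial preliminary observation is that $\phi$ is a bijection on $\mathfrak{H}$: since $\phi\circ\phi=\mathrm{id}$, the automorphism $\phi$ is its own inverse, so every element of $\mathfrak{H}$ can be written as $\phi(w)$ for a (unique) $w\in\mathfrak{H}$. This is precisely what lets me reduce any identity involving $\diamond$ to a corresponding identity involving $\ast$, because Lemma \ref{harmonic_diamond} says $\phi$ intertwines the two products.

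For commutativity I would take arbitrary $a,b\in\mathfrak{H}$ and set $w_1:=\phi(a)$, $w_2:=\phi(b)$, so that $a=\phi(w_1)$ and $b=\phi(w_2)$. Applying Lemma \ref{harmonic_diamond} together with the commutativity of $\ast$ then gives
\[
 a\diamond b=\phi(w_1)\diamond\phi(w_2)=\phi(w_1\ast w_2)=\phi(w_2\ast w_1)=\phi(w_2)\diamond\phi(w_1)=b\diamond a.
\]

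For associativity I would likewise write $a=\phi(w_1)$, $b=\phi(w_2)$, $c=\phi(w_3)$ and expand both bracketings. Two applications of Lemma \ref{harmonic_diamond} turn the left-hand side into $\phi\bigl((w_1\ast w_2)\ast w_3\bigr)$ and the right-hand side into $\phi\bigl(w_1\ast(w_2\ast w_3)\bigr)$:
\[
 (a\diamond b)\diamond c=\phi\bigl((w_1\ast w_2)\ast w_3\bigr),\qquad
 a\diamond(b\diamond c)=\phi\bigl(w_1\ast(w_2\ast w_3)\bigr),
\]
and these coincide since $\ast$ is associative.

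I expect no real obstacle here: the argument is a routine transport of structure. The only point deserving even a moment's care is the surjectivity (equivalently bijectivity) of $\phi$, which is immediate from the involutivity $\phi\circ\phi=\mathrm{id}$ recorded at the start of the section; this surjectivity is exactly what guarantees that checking the identities on elements of the form $\phi(w)$ suffices to establish them for all of $\mathfrak{H}$.
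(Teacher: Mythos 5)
Your proof is correct and is exactly the argument the paper intends: transporting associativity and commutativity of $\ast$ to $\diamond$ via Lemma \ref{harmonic_diamond}, using that $\phi$ is a bijection since $\phi\circ\phi=\mathrm{id}$. The paper merely states this is "easily seen," and your write-up supplies precisely the routine details.
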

\begin{lem}[Key lemma for Theorem \ref{main1} (i)] \label{key1}
 The following vector subspaces $A_1,\ldots,A_4$ of $\mathfrak{H}$ are coincident.
 \begin{align*}
  A_1&:= \sspan \{ \phi(u \ast v)x \mid u,v \in y\mathfrak{H} \}, \\
  A_2&:= \sspan \{ (u \diamond v)x \mid u,v \in y\mathfrak{H} \}, \\
  A_3&:= \sspan \{ yw_1(x+y)^n w_2x - y(x+y)^n (\tau(w_1) \diamond w_2)x \mid n\in\mathbb{Z}_{\ge0}, w_1,w_2 \in \mathfrak{H} \}, \\
  A_4&:= \sspan \{ yw_1 w_2x - y (\tau(w_1) \diamond w_2)x \mid w_1,w_2 \in \mathfrak{H} \}.
 \end{align*}
\end{lem}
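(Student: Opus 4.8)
The plan is to identify the four spaces in three stages: first the two cheap coincidences $A_1=A_2$ and $A_3=A_4$, and then the substantive equality $A_2=A_4$, which I will reduce to a pair of bilinear ``transfer'' identities among the generators of $A_4$.

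For $A_1=A_2$ I use Lemma~\ref{harmonic_diamond}: since $\phi(u\ast v)=\phi(u)\diamond\phi(v)$ and $\phi$ is an involution with $\phi(y)=-y$, the map $\phi$ carries $y\mathfrak{H}$ onto $y\mathfrak{H}$ (because $\phi(yw)=-y\phi(w)$ and $\phi$ is bijective on $\mathfrak{H}$). Hence $(u,v)\mapsto(\phi(u),\phi(v))$ is a bijection of $y\mathfrak{H}\times y\mathfrak{H}$, so the elements $\phi(u\ast v)x=(\phi(u)\diamond\phi(v))x$ run over all $(p\diamond q)x$ with $p,q\in y\mathfrak{H}$, giving $A_1=A_2$. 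For $A_3=A_4$ I note that $\tau(x+y)=x+y$, so $\tau(w_1(x+y)^n)=(x+y)^n\tau(w_1)$, and by Proposition~\ref{x+y} one has $(x+y)^n\tau(w_1)\diamond w_2=(x+y)^n(\tau(w_1)\diamond w_2)$. Thus the generator of $A_3$ attached to $(n,w_1,w_2)$ is literally the generator of $A_4$ attached to $(w_1(x+y)^n,w_2)$; conversely $A_4\subseteq A_3$ by taking $n=0$, so $A_3=A_4$.

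The heart is $A_2=A_4$. Writing $D(w_1,w_2):=yw_1w_2x-y(\tau(w_1)\diamond w_2)x$ for the generators of $A_4$, I plan to establish, for each letter $c\in\{x,y\}$ and all $w_1,w_2\in\mathfrak{H}$, the transfer identity
\[ D(w_1 c,w_2)-D(w_1,cw_2)=\varepsilon_c\,(y\tau(w_1)\diamond yw_2)x,\qquad \varepsilon_y=+1,\ \varepsilon_x=-1. \]
Both cases unwind to the single defining rule $yA\diamond yB=y(A\diamond yB)-y(xA\diamond B)$: after substituting $\tau(w_1 c)=\tau(c)\tau(w_1)$, the left-hand side equals $y\big[(\tau(w_1)\diamond cw_2)-(\tau(c)\tau(w_1)\diamond w_2)\big]x$, and expanding $y\tau(w_1)\diamond yw_2$ by that rule matches it on the nose for $c=y$, while for $c=x$ it matches after also invoking commutativity of $\diamond$ (Corollary~\ref{ac}) to write $y\tau(w_1)\diamond yw_2=yw_2\diamond y\tau(w_1)$, expand, and swap the factors back inside. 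Granting these identities, both inclusions are immediate. Every generator $(p\diamond q)x$ of $A_2$, with $p=yp'$ and $q=yq'$, equals $(y\tau(w_1)\diamond yw_2)x$ for $w_1=\tau(p')$, $w_2=q'$, hence equals $D(\tau(p')\,y,q')-D(\tau(p'),yq')\in A_4$ by the $c=y$ identity; this gives $A_2\subseteq A_4$. Conversely, peeling the letters of $w_1$ from the right and prepending them to $w_2$, each identity shows $D(w_1c,w_2)\equiv D(w_1,cw_2)\pmod{A_2}$, so iterating yields $D(w_1,w_2)\equiv D(1,w_1w_2)=0$ modulo $A_2$, whence $A_4\subseteq A_2$.

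I expect the only real obstacle to be finding the correct shape of the transfer identities—in particular, recognizing that the $c=x$ case, which at first looks asymmetric and intractable (the two diamond products peel on opposite factors), collapses to exactly the same right-hand side as the $c=y$ case once commutativity of $\diamond$ is used. Everything else is bookkeeping with the recursive definitions of $\diamond$, $\tau$, and $\phi$, together with Proposition~\ref{x+y} and Corollary~\ref{ac}.
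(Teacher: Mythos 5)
Your proof is correct and follows essentially the same route as the paper: the identifications $A_1=A_2$ via Lemma \ref{harmonic_diamond} and $A_3=A_4$ via Proposition \ref{x+y}, the same transfer identity (an instance of the rule $yA\diamond yB=y(A\diamond yB)-y(xA\diamond B)$) to get $A_2\subseteq A_4$, and the same induction that peels letters off $w_1$ and prepends them to $w_2$ until one reaches $D(1,w_1w_2)=0$ for $A_4\subseteq A_2$. The only cosmetic difference is that the paper decomposes $w_1$ over the alphabet $\{y,\,x+y\}$ so that only the $y$-transfer identity is needed (the $x+y$ step being free by Proposition \ref{x+y}), whereas you work over $\{x,y\}$ and therefore also need the $c=x$ identity, which you correctly obtain from the commutativity of $\diamond$.
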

\begin{proof}
 The inclusion relation $A_4\subset A_3$ is obvious, and $A_1=A_2$ and $A_3\subset A_4$ are trivial respectively by Lemma \ref{harmonic_diamond} and Proposition \ref{x+y}.

 We also find the inclusion relation $A_{2}\subset A_{4}$ holds by the definition of the product $\diamond\,$;
 \begin{align*}
  &(yw_{1}\diamond yw_{2})x \\
  &=y(w_{1}\diamond yw_{2}-xw_{1}\diamond w_{2})x \\
  &=-\left(y\tau(w_{1})(yw_{2})x-y(w_{1}\diamond yw_{2})x\right)+\left(y\tau(xw_{1})w_{2}x-y(xw_{1}\diamond w_{2})x\right).
 \end{align*}
 Thus, we need to show $A_{4}\subset A_{2}$ i.e.,
 \begin{align} \label{eq:A4subA2}
  yw_{1}w_{2}x-y(\tau(w_{1})\diamond w_{2})x\in\sspan\{(u\diamond v)x\mid u,v\in y\mathfrak{H}\}
 \end{align}
 for $w_{1},w_{2}\in\mathfrak{H}$.
 Let
 \[
  w_{1}:=u_{1}\cdots u_{m} \,\, (m\in\mathbb{Z}_{\ge0}, u_{1},\ldots,u_{m}\in\{y,x+y\})
 \]
 and we prove (\ref{eq:A4subA2}) by the induction on $m$.
 The case $m=0$ is obvious.
 Assume that $m>0$.
 Put $w_{1}'u_{m}:=w_{1}$ and $w_{2}':=u_{m}w_{2}$.
 Then we have
 \[
  yw_{1}'w_{2}'x-y(\tau(w_{1}')\diamond w_{2}')x\in\sspan\{(u\diamond v)x\mid u,v\in y\mathfrak{H}\}
 \]
 by the induction hypothesis.
 Thus, we get
 \begin{align*}
  yw_{1}w_{2}x-y(\tau(w_{1})\diamond w_{2})x
  & =yw_{1}'w_{2}'x-y(\tau(w_{1}')\diamond w_{2}')x\\
  &\quad +
  \begin{cases}
   0 & (u_{m}=x+y), \\
   (y\tau(w_{1}') \diamond yw_{2})x & (u_{m}=y)
  \end{cases} \\
  &\in\sspan \{(u\diamond v)x\mid u,v\in y\mathfrak{H}\}. \qedhere
 \end{align*}
\end{proof}
\begin{proof}[Proof of Theorem \ref{main1} (i)]
 By Theorem \ref{kawashima_lin} and Lemma \ref{key1}, we have the desired result.
\end{proof}

\section{Proofs of Theorem \ref{main1} (ii) and (iii)}
To prove Theorem \ref{main1} (iii), we need to show Theorem \ref{main3-1} and \ref{main3-2}.
For positive integers $k_{1},\ldots,k_{r}$,
we denote by $F_{(k_1,\ldots,k_r)}(z)$ the Kawashima function as defined by Yamamoto in \cite{Yam17}.
\begin{lem}[{\cite[Proposition 2.6]{Yam17}}] \label{thm:Fk_at_N}
 For non-negative integer $N$ and positive integers $k_{1},\ldots,k_{r}$, the function $F_{(k_1,\ldots,k_r)}(z)$ is holomorphic at $z=N$ and
 \[
  F_{(k_1,\ldots,k_r)}(N)
  =\sum_{0<n_{1}\le\cdots\le n_{r}\le N}\frac{1}{n_1^{k_1}\cdots n_r^{k_r}}.
 \]
\end{lem}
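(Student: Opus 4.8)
The plan is to isolate from Yamamoto's definition two structural properties of the Kawashima function and then deduce the evaluation by a purely formal induction. Write $\mathbf{k}=(k_1,\ldots,k_r)$, $\mathbf{k}'=(k_1,\ldots,k_{r-1})$, and let
\[
 S_N^{\mathbf{k}}:=\sum_{0<n_1\le\cdots\le n_r\le N}\frac{1}{n_1^{k_1}\cdots n_r^{k_r}}
\]
denote the truncated sum on the right-hand side, with the conventions $S_N^{\emptyset}=1$ for the empty index and $S_0^{\mathbf{k}}=0$ when $r\ge 1$. Splitting off the terms with $n_r=N$, and noting that the constraint $n_{r-1}\le n_r=N$ is exactly $n_{r-1}\le N$, gives the difference recursion
\[
 S_N^{\mathbf{k}}-S_{N-1}^{\mathbf{k}}=\frac{1}{N^{k_r}}\,S_N^{\mathbf{k}'}.
\]
The two properties I would extract from the definition of $F_{\mathbf{k}}$ are: \textbf{(H)} $F_{\mathbf{k}}$ is holomorphic in a neighbourhood of every non-negative integer; and \textbf{(D)} it satisfies the matching difference equation
\[
 F_{\mathbf{k}}(z)-F_{\mathbf{k}}(z-1)=\frac{1}{z^{k_r}}\,F_{\mathbf{k}'}(z),
\]
together with the boundary values $F_{\emptyset}\equiv 1$ and $F_{\mathbf{k}}(0)=0$ for $r\ge 1$.

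Granting \textbf{(H)} and \textbf{(D)}, the evaluation $F_{\mathbf{k}}(N)=S_N^{\mathbf{k}}$ follows by a double induction on the depth $r$ and on $N$. The depth-zero case is the convention $F_{\emptyset}\equiv 1=S_N^{\emptyset}$. Fixing $r\ge 1$ and assuming the claim for depth $r-1$ and all $N$, I would induct on $N$: the base case $N=0$ is the boundary value $F_{\mathbf{k}}(0)=0=S_0^{\mathbf{k}}$, and for $N\ge 1$,
\[
 F_{\mathbf{k}}(N)=F_{\mathbf{k}}(N-1)+\frac{1}{N^{k_r}}F_{\mathbf{k}'}(N)=S_{N-1}^{\mathbf{k}}+\frac{1}{N^{k_r}}S_N^{\mathbf{k}'}=S_N^{\mathbf{k}},
\]
where the first equality is \textbf{(D)} at $z=N$, the second uses the inner hypothesis (for $F_{\mathbf{k}}(N-1)$) and the outer hypothesis (for $F_{\mathbf{k}'}(N)$), and the last is the difference recursion for $S$. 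Holomorphy at $z=N$ is exactly property \textbf{(H)}. This reduces the whole statement to verifying \textbf{(H)} and \textbf{(D)} directly from the definition.

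The main obstacle is the analytic input \textbf{(H)}, not the algebra \textbf{(D)}. Writing $F_{\mathbf{k}}$ via the natural ``complete minus tail'' expression (a Hurwitz-type multiple Dirichlet series in $z$) makes \textbf{(D)} transparent, but it produces terms that are only conditionally convergent, and when $k_r=1$ the naive tail diverges outright; the whole point of the Kawashima function is precisely that the potential singularities at the non-negative integers cancel. I would therefore establish \textbf{(H)} by passing to the meromorphic continuation of the defining series and checking that the residue at each prospective pole $z=N$ vanishes, which is where the careful bookkeeping of Yamamoto's construction enters. If instead the definition is taken as a Newton forward-difference series, with $F_{\mathbf{k}}(z)$ expanded in the binomials $\binom{z}{n}$ against the iterated differences of the sequence $N\mapsto S_N^{\mathbf{k}}$, then the evaluation at $z=N$ is automatic because $\binom{N}{n}=0$ for $n>N$; in that case the entire difficulty migrates to proving locally uniform convergence near $z=N$ through Carlson-type growth bounds on those iterated differences, which is again the crux.
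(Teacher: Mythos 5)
The paper does not actually prove this lemma: it is imported as a black box from Yamamoto's note (Proposition 2.6 of \cite{Yam17}), so there is no internal argument to compare against, and your proposal must be judged as a standalone proof of the cited result. Judged that way, it has a genuine gap. The double induction in your second paragraph --- deducing $F_{\mathbf{k}}(N)=S_N^{\mathbf{k}}$ from holomorphy \textbf{(H)} and the difference equation \textbf{(D)} with boundary values $F_{\emptyset}\equiv 1$, $F_{\mathbf{k}}(0)=0$ --- is correct, and \textbf{(D)} is indeed consistent with the series representation the paper does quote (Lemma \ref{thm:Kawashima_series}), from which it follows by telescoping. But you never commit to a definition of $F_{\mathbf{k}}$, so neither \textbf{(H)} nor \textbf{(D)} (nor the boundary value $F_{\mathbf{k}}(0)=0$) is actually derived from anything; and you state explicitly that \textbf{(H)} --- holomorphy at the non-negative integers, where the naive ``complete minus tail'' expression is only conditionally convergent and diverges outright when $k_r=1$ --- ``is where the careful bookkeeping of Yamamoto's construction enters.'' That cancellation of prospective singularities is precisely the analytic content of the lemma, and it is deferred rather than established. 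You have correctly located the crux and given a sound reduction to it, but as written the proposal is a conditional argument, not a proof; to close it you would need to fix Yamamoto's actual definition (or the Newton forward-difference form you mention) and carry out the convergence and cancellation estimates near each $z=N$.
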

We define an automorphism $S_1$ on $\mathfrak{H}$ by $S_1(x):=x$ and $S_1(y):=x+y$, and a $\mathbb{Q}$-linear map $S$ on $y\mathfrak{H}$ by $S(yw) := yS_1(w) \,\, (w\in\mathfrak{H})$.
\begin{lem} \label{thm:z1coeff}
 For positive integers $k_{1},\ldots,k_{r}$, the coefficients of $z^{1}$ in the Taylor expansion of $F_{(k_1,\ldots,k_r)}(z)$ at $z=0$ is given by
 \[
  \mathit{Z} (S (y^{k_1} xy^{k_2-1} \cdots xy^{k_r-1} x)).
 \]
\end{lem}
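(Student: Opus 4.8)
The plan is to pass from the values $F_{(k_1,\ldots,k_r)}(N)$ furnished by Lemma \ref{thm:Fk_at_N} to an explicit representation of $F_{(k_1,\ldots,k_r)}(z)$ as a single convergent object in which the $z$-dependence is isolated, and then to differentiate once at $z=0$. Writing $w:=k_1+\cdots+k_r$, the truncated multiple sum in Lemma \ref{thm:Fk_at_N} can be rewritten as an iterated integral over the simplex $0<t_1<\cdots<t_w<1$ built from the two forms $dt/t$ and $dt/(1-t)$: each index of the (non-strict) summation is produced by a factor $1/(1-t)=\sum_{n\ge1}t^{n-1}$, and the cutoff $n_r\le N$ is implemented by replacing the factor generating $n_r$ with $(1-t^N)/(1-t)$. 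First I would record this integral formula precisely, check it against Lemma \ref{thm:Fk_at_N} at $z=N$ (the single-variable case $\frac{1}{(k-1)!}\int_0^1(-\log t)^{k-1}\frac{1-t^N}{1-t}\,dt=\sum_{n=1}^N n^{-k}$ being the model), and then replace $N$ by a complex variable $z$.

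The resulting function is holomorphic near $z=0$ and agrees with $F_{(k_1,\ldots,k_r)}$ at all non-negative integers; invoking the rigidity of the Kawashima-function interpolation (the growth bounds in \cite{Yam17}), I would identify it with $F_{(k_1,\ldots,k_r)}(z)$ itself. Since the truncation factor contains $1-t^z$, which vanishes at $z=0$, this reproves that the constant term is $0$ (matching Lemma \ref{thm:Fk_at_N} at $N=0$), and the coefficient of $z^1$ is obtained by differentiating under the integral sign via $\frac{d}{dz}(1-t^z)\big|_{z=0}=-\log t$. Thus the $z^1$-coefficient is again an iterated integral, now of weight $w+1$, into which one extra form $dt/t$ (coming from $-\log t=\int_t^1 ds/s$) has been inserted at the variable that used to generate $n_r$.

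The final step is to recognise this weight-$(w+1)$ iterated integral as $\mathit{Z}(S(y^{k_1}xy^{k_2-1}\cdots xy^{k_r-1}x))$. Under the dictionary $x\leftrightarrow dt/t$ and $y\leftrightarrow dt/(1-t)$, a non-strict separation $n_i\le n_{i+1}$ corresponds to the form $dt/(t(1-t))=dt/t+dt/(1-t)$, which is exactly the image $S_1(y)=x+y$; this is the structural reason the answer is phrased through $S$. Writing $S(y^{k_1}xy^{k_2-1}\cdots xy^{k_r-1}x)=y(x+y)^{k_1-1}x(x+y)^{k_2-1}x\cdots x(x+y)^{k_r-1}x$ and reading it as an iterated integral, I would match it term by term with the differentiated integrand, the inserted $dt/t$ accounting for the trailing letter $x$. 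As a consistency check, for $r=1$ both sides equal $k_1\zeta(k_1+1)$, and the depth-two computation reduces, via the sum formula, to the identity $\zeta(2,2)=3\zeta(1,3)$.

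The main obstacle is the bookkeeping in this identification: getting the precise integrand in the first step right (the exact placement of the truncation and the encoding of the non-strict inequalities) and then matching it letter by letter with $S(y^{k_1}xy^{k_2-1}\cdots xy^{k_r-1}x)$. A secondary but genuine difficulty is the analytic justification — differentiation under the integral sign near the endpoint singularities $t=0,1$, and the rigidity used to identify the interpolation with $F_{(k_1,\ldots,k_r)}$ — together with the regularization needed when some $k_i=1$, where the individual sums are only conditionally convergent. An alternative route that avoids the integral representation is to differentiate the difference equation $F_{(k_1,\ldots,k_r)}(z)-F_{(k_1,\ldots,k_r)}(z-1)=F_{(k_1,\ldots,k_{r-1})}(z)/z^{k_r}$ (obtained from Lemma \ref{thm:Fk_at_N} by isolating the terms with $n_r$ maximal) and induct on $r$, carrying the derivatives $F_{(k_1,\ldots,k_{r-1})}'(j)$ at all integers $j$ as the inductive data; here the obstacle shifts to identifying these partial derivative-sums, but the same $x+y$ mechanism governs the outcome.
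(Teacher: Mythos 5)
First, a point of comparison: the paper does not prove this lemma at all --- its entire proof is the citation ``this is a special case of \cite[Proposition 3.1]{Yam17}''. So what you are proposing is a from-scratch rederivation of Yamamoto's result, which is a much larger undertaking than the paper's one line, and it needs to stand on its own.

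The decisive gap is in your final step, the claim that the differentiated integrand can be matched ``term by term'' with $\mathit{Z}(S(y^{k_1}xy^{k_2-1}\cdots xy^{k_r-1}x))$, with the non-strict separations accounting for the letters $x+y$. That matching fails already in depth one. Take $r=1$, $k_1=2$: your representation is $F_{(2)}(z)=\int_0^1(-\log t)\frac{1-t^z}{1-t}\,dt$, and differentiating under the integral at $z=0$ gives $\int_0^1\frac{(\log t)^2}{1-t}\,dt$. Writing each $-\log t$ as $\int_t^1 ds/s$ and shuffling the two independent upper variables produces $2\,\mathit{Z}(yx^2)=2\zeta(3)$, i.e.\ a \emph{multiplicity} $2$ on a single word --- and in general $k\,\mathit{Z}(yx^k)$ for $F_{(k)}$. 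The target is $\mathit{Z}(y(x+y)^{k-1}x)=\mathit{Z}(yx^2)+\mathit{Z}(y^2x)$ for $k=2$, a \emph{sum over words}. These agree as real numbers only via the duality relation $\zeta(1,2)=\zeta(3)$ (in general via sum-formula/duality type identities), not as iterated integrals; the same mismatch occurs in the reflected orientation $\frac{1-(1-t)^z}{t}$ and persists in higher depth. So the $(x+y)$-structure of $S$ does \emph{not} fall out of a letter-by-letter reading of your integrand: closing this gap requires genuine MZV identities of exactly the flavour the lemma is being used to establish, which at best makes the argument much longer and at worst circular. The remaining issues you flag (existence and exact form of the multivariable integral representation, identification with $F_{(k_1,\ldots,k_r)}$ via Carlson-type rigidity, differentiation under the integral, the $k_i=1$ regularization) are each real but probably surmountable; the matching step is the one that breaks as described. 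Your ``alternative route'' --- differentiating the recursion for $F_{(k_1,\ldots,k_r)}$ and inducting on $r$ --- is in fact much closer to what works: it is essentially how the paper derives the subsequent Lemma \ref{lem:star_dual} from Lemma \ref{thm:Kawashima_series}, and it is the mechanism by which the $x+y$ letters actually arise.
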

\begin{proof}
 This is a special case of {\cite[Proposition 3.1]{Yam17}}.
\end{proof}
\begin{lem}[{\cite[Proposition 2.9]{Yam17}}] \label{thm:Kawashima_series}
 For positive integers $r$ and $k_{1},\ldots,k_{r}$,  we have
 \[
  F_{(k_1,\ldots,k_r)}(z)
  =\sum_{n'=1}^\infty \left(\sum_{0<n_1\le\cdots\le n_r=n'}
   \frac{1}{n_1^{k_1} \cdots n_r^{k_r}}-\frac{F_{(k_{1}, \ldots,k_{r-1})}(z+n')}{(z+n')^{k_{r}}} \right).
 \]
\end{lem}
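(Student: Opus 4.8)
The plan is to write $G(z)$ for the series on the right-hand side, and to prove the identity in three movements: first show that $G$ is a well-defined holomorphic function, then match $G$ with $F_{(k_1,\ldots,k_r)}$ at every non-negative integer by a telescoping computation, and finally upgrade this pointwise agreement to an identity of holomorphic functions by a Carlson-type uniqueness theorem. Throughout I would induct on $r$, so that $F_{(k_1,\ldots,k_{r-1})}$, together with growth bounds for it and its derivative in a right half-plane, is available as an induction hypothesis when treating $F_{(k_1,\ldots,k_r)}$.

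For the holomorphy of $G$, put $\psi(t):=F_{(k_1,\ldots,k_{r-1})}(t)/t^{k_r}$, so that the $n'$-th summand of $G(z)$ is exactly $\psi(n')-\psi(z+n')$. By the mean value estimate this difference is controlled by $|z|\sup|\psi'|$ on the segment joining $n'$ to $z+n'$, and the finite difference gains one power of $n'$ over the individual terms. Since the multiple harmonic sum $F_{(k_1,\ldots,k_{r-1})}(t)$ grows at most like $(\log t)^{r-1}$ and the inductive derivative bound gives $F'_{(k_1,\ldots,k_{r-1})}(t)=O((\log t)^{r-2}t^{-k_{r-1}})$, one obtains a summand bounded by $O((\log n')^{r-1}(n')^{-k_r-1})$, uniformly for $z$ in compact sets avoiding the points $-1,-2,\ldots$. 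Hence the series converges locally uniformly and $G$ is holomorphic on $\{\operatorname{Re}(z)>-1\}$, in particular on the closed right half-plane.

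The integer computation is pure telescoping. By Lemma \ref{thm:Fk_at_N} applied to $k_1,\ldots,k_{r-1}$, the inner sum over $0<n_1\le\cdots\le n_r=n'$ equals $\psi(n')$, so for a non-negative integer $N$ the partial sums of $G(N)=\sum_{n'}(\psi(n')-\psi(N+n'))$ collapse to $\sum_{n'=1}^N\psi(n')$, the tail $\sum_{m=M+1}^{N+M}\psi(m)$ tending to $0$ as $M\to\infty$ because $\psi(m)\to0$. Re-expanding $\psi(n')=(n')^{-k_r}F_{(k_1,\ldots,k_{r-1})}(n')$ and applying Lemma \ref{thm:Fk_at_N} once more identifies $\sum_{n'=1}^N\psi(n')$ with $\sum_{0<n_1\le\cdots\le n_r\le N}(n_1^{k_1}\cdots n_r^{k_r})^{-1}=F_{(k_1,\ldots,k_r)}(N)$. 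Thus $G(N)=F_{(k_1,\ldots,k_r)}(N)$ for all $N\in\mathbb{Z}_{\ge0}$.

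The main obstacle is the last step: deducing $G\equiv F_{(k_1,\ldots,k_r)}$ from agreement at the integers. I would apply Carlson's theorem to $\Phi:=F_{(k_1,\ldots,k_r)}-G$, which vanishes on $\mathbb{Z}_{\ge0}$; this requires controlling the growth of both functions in the closed right half-plane and, crucially, their type along the imaginary axis. Summing the convergence estimate shows $G$ is polynomially (indeed polylogarithmically) bounded there, and the analogous bound for $F_{(k_1,\ldots,k_r)}$ must be extracted from the definition in \cite{Yam17}; since these bounds are of exponential type $0<\pi$, Carlson's theorem then forces $\Phi\equiv0$. The genuinely delicate part is therefore not the combinatorial telescoping but the verification of these growth and derivative estimates, which is exactly what the induction on $r$ is set up to supply.
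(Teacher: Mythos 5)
The paper gives no proof of this lemma at all: it is imported verbatim as \cite[Proposition 2.9]{Yam17}, so the only honest comparison is with Yamamoto's argument, which works directly from his definition of the Kawashima function. Your route --- holomorphy of the right-hand side, matching at non-negative integers via Lemma \ref{thm:Fk_at_N}, then Carlson's theorem --- is a genuinely different strategy, and the middle step is sound: writing $\psi(t)=F_{(k_1,\ldots,k_{r-1})}(t)/t^{k_r}$, the partial sums of $\sum_{n'}(\psi(n')-\psi(N+n'))$ do collapse to $\sum_{n'=1}^{N}\psi(n')=F_{(k_1,\ldots,k_r)}(N)$, the discarded tail being a sum of $N$ terms each tending to $0$.

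The genuine gap is the final step. Agreement on $\mathbb{Z}_{\ge0}$ determines nothing by itself (adding $\sin\pi z$ preserves it), so the entire burden falls on Carlson's theorem, which requires $F_{(k_1,\ldots,k_r)}$ to be holomorphic on the closed right half-plane and of exponential type there, with type less than $\pi$ on the imaginary axis. Neither property is available from anything quoted in this paper --- Lemma \ref{thm:Fk_at_N} gives holomorphy only near the non-negative integers --- and your induction on $r$ cannot supply it, since the inductive hypothesis only controls the depth-$(r-1)$ function (it can bound $G$, but not the depth-$r$ function $F_{(k_1,\ldots,k_r)}$ whose identification with $G$ is the very thing being proved). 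You acknowledge this point (``must be extracted from the definition in \cite{Yam17}'') but never state that definition or carry out the estimate, so the one input that makes the uniqueness argument work is missing. As written, the proposal establishes only that the two sides agree at non-negative integers, not the claimed identity of functions; and since closing the gap forces you to work with Yamamoto's definition anyway, his direct manipulation of the defining series is both shorter and avoids Carlson's theorem entirely.
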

\begin{lem}[Key lemma for Theorem \ref{main3-1}] \label{lem:star_dual}
 For positive integers $k_{1},\ldots,k_{r}$, we have
 \begin{align*}
  &\mathit{Z} (S (y^{k_1} xy^{k_2-1} \cdots xy^{k_r-1} x)) \\
  &=\sum_{j=1}^{r} \sum_{0<n_{1}\le\cdots\le n_{j-1}\le n_{j}>n_{j+1}>\cdots>n_{r}>0}
   (-1)^{r-j} \left(\frac{k_{j}}{n_{j}}\right) \frac{1}{n_1^{k_1}\cdots n_r^{k_r}}.
 \end{align*}
\end{lem}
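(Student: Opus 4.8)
The plan is to convert the left-hand side into an analytic quantity and then to prove a more refined, parameter-dependent identity by induction. By Lemma \ref{thm:z1coeff}, the number $\mathit{Z}(S(y^{k_1}xy^{k_2-1}\cdots xy^{k_r-1}x))$ is precisely the coefficient of $z^1$ in the Taylor expansion of $F_{(k_1,\ldots,k_r)}(z)$ at $z=0$, that is, the derivative $F_{(k_1,\ldots,k_r)}'(0)$. So it suffices to compute $F_{(k_1,\ldots,k_r)}'(0)$ and match it with the claimed nested sum. The obstruction to a direct induction on $r$ is that the recursion of Lemma \ref{thm:Kawashima_series} expresses $F_{(k_1,\ldots,k_r)}$ through $F_{(k_1,\ldots,k_{r-1})}$ at the shifted arguments $z+n'$; hence I will need the derivative of the lower-depth function not only at $0$ but at every positive integer.

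This suggests proving, by induction on $r$, the stronger statement that for every non-negative integer $N$,
\[
F_{(k_1,\ldots,k_r)}'(N)=\sum_{j=1}^{r}\sum_{\substack{0<n_1\le\cdots\le n_{j-1}\le n_j>n_{j+1}>\cdots>n_r\\ n_r>N}}(-1)^{r-j}\left(\frac{k_j}{n_j}\right)\frac{1}{n_1^{k_1}\cdots n_r^{k_r}}.
\]
Setting $N=0$ makes the constraint $n_r>0$ vacuous and recovers exactly the right-hand side of the Lemma, so the Lemma follows immediately. For the base case one may take $r=0$, where $F_{()}\equiv 1$ and both sides vanish; the case $r=1$ can also be checked directly from Lemma \ref{thm:Kawashima_series}, which yields $F_{(k_1)}'(N)=\sum_{n_1>N}k_1 n_1^{-k_1-1}$.

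For the inductive step I would differentiate the series of Lemma \ref{thm:Kawashima_series} term by term, noting that the constant first summand drops out, and then substitute $m=z+n'$ at $z=N$ to obtain
\[
F_{(k_1,\ldots,k_r)}'(N)=\sum_{m>N}\left(\frac{k_r\,F_{(k_1,\ldots,k_{r-1})}(m)}{m^{k_r+1}}-\frac{F_{(k_1,\ldots,k_{r-1})}'(m)}{m^{k_r}}\right).
\]
Into the first summand I insert the closed form $F_{(k_1,\ldots,k_{r-1})}(m)=\sum_{0<n_1\le\cdots\le n_{r-1}\le m}n_1^{-k_1}\cdots n_{r-1}^{-k_{r-1}}$ of Lemma \ref{thm:Fk_at_N}; writing $n_r=m$ this produces exactly the $j=r$ term of the target sum, with sign $(-1)^{r-r}=+1$ and factor $k_r/n_r$. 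Into the second summand I insert the induction hypothesis for $F_{(k_1,\ldots,k_{r-1})}'(m)$; since its innermost constraint reads $n_{r-1}>m=n_r$, appending the factor $n_r^{-k_r}$ and the outer constraint $n_r>N$ lengthens each decreasing chain by one step and reproduces the terms $j=1,\ldots,r-1$, the sign bookkeeping $-(-1)^{(r-1)-j}=(-1)^{r-j}$ coming out correctly.

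The main obstacle is analytic rather than combinatorial: one must justify the term-by-term differentiation of the defining series and the absolute convergence of the resulting double sums over $m>N$, in order to rearrange them freely into the claimed nested form. This calls for a growth estimate on $F_{(k_1,\ldots,k_{r-1})}$ and its derivative at large real arguments (at worst polylogarithmic, arising from the $k_1=1$ case), after which the factor $m^{-k_r-1}$ or $m^{-k_r}$ with $k_r\ge 1$ renders everything summable. Once this analytic point is settled, the matching of peaks, chains, and signs is the routine remainder of the argument.
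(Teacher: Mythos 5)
Your proposal is correct and follows essentially the same route as the paper: the paper also extracts the coefficient of $z^1$ from Lemma \ref{thm:Kawashima_series} shifted by $n$, evaluates the depth-$(r-1)$ value term via Lemma \ref{thm:Fk_at_N}, and iterates the resulting recursion (your explicit induction on $r$ with the statement at all integers $N$ is exactly the paper's ``using this equation repeatedly and putting $n=0$''), before identifying the left-hand side via Lemma \ref{thm:z1coeff}. The analytic justification you flag is likewise left implicit in the paper's proof.
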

\begin{proof}
 For non-negative integer $n$, we denote by $A_{(k_1,\ldots,k_r)}^{(j)}(n)$ the coefficients
 of $z^{j}$ in the Taylor expansion of $F_{(k_1,\ldots,k_r)}(z+n)$.
 By replacing $z$ with $z+n$ and seeing the coefficients of $z^{1}$ of Lemma \ref{thm:Kawashima_series}, we have
 \begin{align*}
  &A_{(k_1,\ldots,k_r)}^{(1)}(n) \\
  &=\sum_{n'=1}^\infty \left( \frac{k_r}{(n+n')^{k_r+1}} A_{(k_1,\ldots,k_{r-1})}^{(0)}(n+n')
    - \frac{1}{(n+n')^{k_r}} A_{(k_1,\ldots,k_{r-1})}^{(1)}(n+n') \right) \\
  &=\sum_{n'=1}^\infty \Biggl( \frac{k_r}{(n+n')^{k_r+1}} \sum_{0<n_1\le \cdots \le n_{r-1} \le n+n'}
     \frac{1}{n_1^{k_1}\cdots n_{r-1}^{k_{r-1}}} - \frac{1}{(n+n')^{k_r}} A_{(k_1,\ldots,k_{r-1})}^{(1)}(n+n') \Biggr) \\
  &\qquad\qquad\qquad\qquad\qquad\qquad\qquad\qquad\qquad\qquad\qquad\quad\quad\,\, \textrm{(by Lemma \ref{thm:Fk_at_N})} \\
  &=\sum_{0<n_{1}\le\cdots\le n_{r}>n} \left(\frac{k_{r}}{n_{r}}\right)\frac{1}{n_1^{k_1}\cdots n_r^{k_r}}
     -\sum_{m>n}\frac{1}{m^{k_{r}}}A_{(k_1,\ldots,k_{r-1})}^{(1)}(m).
 \end{align*}
 By using this equation repeatedly and putting $n=0$, we obtain
 \[
  A_{(k_1,\ldots,k_r)}^{(1)}(0)
  =\sum_{j=1}^{r}\sum_{0<n_{1}\le\cdots \le n_{j}>n_{j+1}>\cdots>n_{r}>0}
   \left(\frac{k_{j}}{n_{j}}\right) \frac{(-1)^{r-j}}{n_1^{k_1}\cdots n_r^{k_r}}.
 \]
 Since
 \[
  A_{(k_1,\ldots,k_r)}^{(1)}(0)
  =\mathit{Z} (S (y^{k_1} xy^{k_2-1} \cdots xy^{k_r-1} x))
 \]
 by Lemma \ref{thm:z1coeff}, the claim is proved.
\end{proof}
\begin{thm} \label{main3-1}
 For $A\in y\mathfrak{H}$ and $B\in\mathfrak{H}x$, there exist Dirichlet series $L_{1}(s)$ and $L_{2}(s)$ such that $L_{1}(s)$ and $L_{2}(s)$ converge for all $s\in\mathbb{Z}_{\ge0}$, and
 \[
  \mathit{Z}(A(x+y)^{s}B)=L_{1}(s)+sL_{2}(s)
 \]
for all $s\in\mathbb{Z}_{\ge0}$.
\end{thm}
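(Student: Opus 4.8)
The plan is to move everything into the single normal form $\mathit{Z}(y(x+y)^sv x)$, to change the alphabet so that the inserted block $(x+y)^s$ fuses into one leading power of $x+y$, and then to read off the $s$-dependence directly from the closed formula of Lemma~\ref{lem:star_dual}. First I would use Theorem~\ref{main1}~(i): for $A=yw_1\in y\mathfrak{H}$ and $B=w_2x\in\mathfrak{H}x$ it gives $\mathit{Z}(yw_1(x+y)^sw_2x)=\mathit{Z}(y(x+y)^s(\tau(w_1)\diamond w_2)x)$ for every $s$, so it suffices to treat $A=y$ and $B=vx$ with $v\in\mathfrak{H}$ arbitrary; this also has the effect of placing the block in the innermost slot, which keeps the later analysis cleanest. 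Since $\{x,x+y\}$ is a basis of $\mathbb{Q}x+\mathbb{Q}y$, the monomials in $x$ and $x+y$ form a $\mathbb{Q}$-basis of $\mathfrak{H}$; expanding $v$ in this basis and using linearity I reduce to $v=W$, a single word in the letters $x$ and $x+y$. The gain is that $(x+y)^sW$ is again a word in $x,x+y$ whose leading maximal $(x+y)$-block has length $s+a$ for some $a\in\mathbb{Z}_{\ge0}$ depending on $W$; in particular any letter $y$ that $v$ might have begun with disappears under the expansion $y=(x+y)-x$ and cannot obstruct this.

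Next I would identify $\mathit{Z}(y(x+y)^sWx)$ with a Kawashima coefficient. Applying $S_1^{-1}$ to the word $(x+y)^sWx$ (which ends in $x$) turns it into a word $w'$ in $x,y$ ending in $x$, so that $y(x+y)^sWx=S(yw')$. Writing $yw'=y^{k_1}xy^{k_2-1}\cdots xy^{k_r-1}x$, the merged leading block forces $k_1=s+a+1$, while $k_2,\dots,k_r$ are constants independent of $s$. By Lemma~\ref{thm:z1coeff} (with Lemma~\ref{thm:Fk_at_N} ensuring holomorphy at the origin), $\mathit{Z}(y(x+y)^sWx)$ is precisely the coefficient of $z^1$ at $z=0$ of $F_{(s+a+1,k_2,\dots,k_r)}(z)$, and Lemma~\ref{lem:star_dual} evaluates it as the explicit ``star-dual'' multiple sum.

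In that closed form the only $s$-dependent quantities are the power $n_1^{-k_1}=n_1^{-(s+a+1)}$ and, in the single summand $j=1$, the factor $k_1/n_1=(s+a+1)/n_1$; every factor $k_j/n_j$ with $j\ge2$ is constant in $s$. Collecting the sum according to the value $n_1=m$ and performing the remaining summations over $n_2,\dots,n_r$, which yield $m$-dependent coefficients $\alpha_m$ (from the terms $j\ge2$) and $\gamma_m$ (from $j=1$), gives
\[
 \mathit{Z}(y(x+y)^sWx)=\sum_{m\ge1}\frac{\alpha_m}{m^{\,s+a+1}}+(s+a+1)\sum_{m\ge1}\frac{\gamma_m}{m^{\,s+a+2}},
\]
which is manifestly of the form $L_1(s)+sL_2(s)$ with $L_1,L_2$ Dirichlet series; summing the finitely many such contributions coming from the basis expansion of $v$ (a finite sum of shifted Dirichlet series is again a Dirichlet series) and from the reduction in the first step preserves this shape.

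The main obstacle is entirely analytic and resides in the last step: I must show that the inner summations defining $\alpha_m$ and $\gamma_m$ converge, and that the two outer Dirichlet series converge at every non-negative integer $s$, in particular at $s=0$, where the exponent $a+1$ can be as small as $1$. The mechanism that saves convergence is that $\alpha_m$ and $\gamma_m$ are tails of convergent multiple zeta-type sums and therefore decay polynomially in $m$ (for instance, for $W=x$ one finds $\alpha_m=\sum_{n\ge m}n^{-2}=O(1/m)$, so that $\sum_m\alpha_m/m$ converges); arranging the summations so that the outermost index always carries a sufficiently large exponent makes this precise. This is exactly the delicate point that the companion Theorem~\ref{main3-2} is designed to control, but for Theorem~\ref{main3-1} it suffices to verify convergence at the integers $s\in\mathbb{Z}_{\ge0}$, which follows from the convergence of the underlying multiple zeta values.
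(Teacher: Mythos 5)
Your proposal is correct and follows essentially the same route as the paper: reduce via Theorem \ref{main1} (i) to $A=y$ and $B=(x+y)^{k_1-1}x\cdots(x+y)^{k_r-1}x$, identify $\mathit{Z}(y(x+y)^sB)$ with $\mathit{Z}(S(y^{k_1+s}xy^{k_2-1}\cdots xy^{k_r-1}x))$, and apply Lemma \ref{lem:star_dual} so that the only $s$-dependence sits in $n_1^{-s}$ and the factor $(k_1+s)/n_1$, yielding $L_1(s)+sL_2(s)$ after grouping by $n_1=N$. The paper is terser on convergence (it simply exhibits $c_1(N)$ and $c_2(N)$), whereas you spell out why the grouped tails decay; the substance is identical.
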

\begin{proof}
 It is enough to only consider the case $A=y$ and $B=(x+y)^{k_{1}-1}x\cdots(x+y)^{k_{r}-1}x \,\,\,
 (r\in\mathbb{Z}_{\ge0}, k_{1},\ldots,k_{r}\in\mathbb{Z}_{\ge1})$ from Theorem \ref{main1} (i).
 Let $s$ be a non-negative integer.
 We note that
 \[
  \mathit{Z}(A(x+y)^{s}B)
  =\mathit{Z} (S (y^{k_1+s} xy^{k_2-1} \cdots xy^{k_r-1} x)).
 \]
 Thus, by Lemma \ref{lem:star_dual}, we have
 \begin{align*}
  \mathit{Z}(A(x+y)^{s}B)
  &=\sum_{j=1}^{r}\sum_{0<n_{1}\le\cdots\le n_{j}>\cdots>n_{r}>0}
   \left( \frac{k_j}{n_1^s n_j} \right) \frac{(-1)^{r-j}}{n_1^{k_1}\cdots n_r^{k_r}} \\
  &\quad +\sum_{n_{1}>\cdots> n_{r}>0} \left(\frac{s}{n_1^{s+1}}\right)\frac{1}{n_1^{k_1}\cdots n_r^{k_r}}.
 \end{align*}
 Hence, the Dirichlet series
 \[
  L_{1}(s)=\sum_{N=1}^{\infty}\frac{c_{1}(N)}{N^{s}} \quad\textrm{and}\quad L_{2}(s)=\sum_{N=1}^{\infty}\frac{c_{2}(N)}{N^{s}},
 \]
 where
 \begin{align*}
  c_{1}(N)
  &=\sum_{j=1}^{r}\sum_{N=n_{1}\le\cdots\le n_{j}>\cdots>n_{r}>0}
   \left( \frac{k_{j}}{n_{j}} \right) \frac{(-1)^{r-j}}{n_1^{k_1}\cdots n_r^{k_r}}, \\
  c_{2}(N)
  &=\sum_{N=n_{1}>\cdots> n_{r}>0} \left( \frac{1}{n_1} \right) \frac{1}{n_1^{k_1}\cdots n_r^{k_r}}
 \end{align*}
 satisfy the condition.
\end{proof}
\begin{rem}
 By the proof of Theorem \ref{main3-1}, the Dirichlet series $L_{1}(s)$ and $L_{2}(s)$ are explicitly written by the multiple zeta functions.
 Hence, these series can be continued meromorphically to the whole plane $\mathbb{C}$ (for details, see Akiyama-Egami-Tanigawa \cite{AET01} and Zhao \cite{Zha00}).
\end{rem}
\begin{thm} \label{main3-2}
 Let $L_{1}(s) =\sum_{n=1}^{\infty}a_n/n^s$ and $L_{2}(s)=\sum_{n=1}^{\infty}b_n/n^s$ be two Dirichlet series.
 If $L_{1}(s)$ and $L_{2}(s)$ converge for all $s\in\mathbb{Z}_{\ge0}$, and $L_{1}(s)+sL_{2}(s)=0$ for all $s\in\mathbb{Z}_{\ge0}$,
 then we have $a_i=b_i=0$ for all $i$.
\end{thm}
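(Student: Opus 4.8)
The plan is to read the hypothesis as a twisted uniqueness statement for Dirichlet series and to recover the coefficients one index at a time by examining the behaviour of the relation as $s\to+\infty$ through the integers. The starting observation is that convergence of $L_1$ and $L_2$ at $s=0$ forces $\sum_n a_n$ and $\sum_n b_n$ to converge, so in particular, for every fixed $N$, the partial sums $\sum_{N<n\le Y}a_n$ and $\sum_{N<n\le Y}b_n$ are bounded uniformly in $Y$. This uniform boundedness, combined with Abel summation, is exactly what will let me control the tails that survive after isolating a single coefficient.

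First I would set up an induction on $N$: assuming $a_m=b_m=0$ for all $m<N$, I want to deduce $a_N=b_N=0$. Multiplying the hypothesis $L_1(s)+sL_2(s)=0$ by the scalar $N^s$ and using that the terms of index $n=N$ contribute exactly $a_N$ and $b_N$ while all lower terms vanish, I obtain, for every integer $s\ge 0$,
\[ a_N + s b_N + E(s) = 0, \qquad E(s) := \sum_{n>N} a_n \left(\frac{N}{n}\right)^{s} + s \sum_{n>N} b_n \left(\frac{N}{n}\right)^{s}. \]
If I can show $E(s)\to 0$ as $s\to\infty$, then $a_N+sb_N\to 0$ along the integers, which is impossible unless $b_N=0$; feeding this back then forces $a_N=0$, completing the inductive step and hence the theorem. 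Note that only integer values of $s$ enter, matching the hypotheses exactly.

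The crux is the estimate $E(s)\to 0$, and this is where I expect the main difficulty, because of the explicit factor $s$ multiplying the second tail. For the first tail I would apply Abel summation with the positive decreasing weights $(N/n)^{s}$: since the partial sums of $a_n$ beyond $N$ are bounded by some constant $K$, the Dirichlet-test bound gives $\bigl|\sum_{n>N}a_n(N/n)^{s}\bigr|\le K\,(N/(N+1))^{s}$, which tends to $0$. The same argument applied to the $b$-tail gives $\bigl|s\sum_{n>N}b_n(N/n)^{s}\bigr|\le K'\,s\,(N/(N+1))^{s}$. The point I want to stress is that Abel summation produces genuine geometric decay $r^{s}$ with $r=N/(N+1)<1$, and geometric decay defeats the polynomial factor $s$, so even this term vanishes in the limit. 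It is essential that the bound be exponential in $s$ rather than merely $o(1)$: a soft argument showing only that each tail tends to $0$ would not survive multiplication by $s$, so the refinement to an explicit $r^{s}$ estimate is the real content of the step.

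Finally I would assemble the pieces: both contributions to $E(s)$ are dominated by constant multiples of $s\,(N/(N+1))^{s}\to 0$, whence $E(s)\to 0$, the relation $a_N+sb_N\to 0$ holds along the integers, and the induction closes to yield $a_i=b_i=0$ for every $i$.
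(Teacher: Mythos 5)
Your proof is correct and follows essentially the same strategy as the paper: isolate the first index $N$ with a nonvanishing coefficient pair, and show that the term $\frac{a_N+sb_N}{N^s}$ cannot be cancelled by the tail because the tail decays like $s\,(N/(N+1))^{s}$, i.e.\ geometrically faster, as $s\to\infty$ through the integers. The only differences are cosmetic: you frame it as an induction rather than a minimal counterexample, and you obtain the geometric tail bound via Abel summation against the decreasing weights $(N/n)^s$ (which is valid, as the partial sums of the $a_n$ and $b_n$ are bounded by convergence at $s=0$), whereas the paper peels off the $n=N+1$ term and bounds the rest by $a_n,b_n=O(1)$ plus an integral comparison.
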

\begin{proof}
 If the claim in the theorem does not hold, there exists a positive integer $N$ such that
 \begin{align*}
  L_1(s)+sL_2(s)=\sum_{n=N}^{\infty}\frac{a_n+sb_n}{n^s},
 \end{align*}
 where $a_N+sb_N\neq0$.
 Here, we split the sum as
 \begin{align*}
 \sum_{n=N}^{\infty}\frac{a_n+sb_n}{n^s}=I_1(s)+I_2(s),
 \end{align*}
 where
 \begin{align*}
 I_1(s)=\frac{a_N+sb_N}{N^s} \quad\textrm{and}\quad I_2(s) = \sum_{n=N+1}^{\infty}\frac{a_n+sb_n}{n^s}.
 \end{align*}
 Since $L_1(s)$ and $L_2(s)$ are convergent at $s=0$,
 $a_n=O(1)$ and $b_n=O(1)$ hold.
 Hence for large real $s$, we have
 \begin{align*}
  \left|\sum_{n=N+2}^{\infty}\frac{a_n+sb_n}{n^s}\right|\ll|s|\int_{N+1}^\infty x^{-s}dx\ll(N+1)^{-s}.
 \end{align*}
 Therefore we have
 \begin{align*}
  I_2(s)=\frac{a_{N+1}+sb_{N+1}}{(N+1)^s}+\sum_{n=N+2}^{\infty}\frac{a_n+sb_n}{n^s}=O\left(s(N+1)^{-s}\right).
 \end{align*}
 Hence there exists $\sigma_0\in\mathbb{R}$ such that $|I_1(s)|>|I_2(s)|$ holds for any real $s>\sigma_0$.
 On the other hand, we have $I_1(s)+I_2(s)=0$ for any non-negative integer $s$.
 This is a contradiction, and the theorem is proved.
\end{proof}

\begin{proof}[Proof of Theorem \ref{main1} (iii)]
 By Theorems \ref{main3-1} and \ref{main3-2}, we have the desired result.
\end{proof}

Now, we prove Theorem \ref{main1} (ii).
We need the following lemma (for a proof, see e.g., Panzer \cite[Lemma 3.3.5]{Pan15}).
\begin{lem} \label{poincare}
The set of complex functions
\[
\{{\rm Li}_{\Bbbk}(z)\mid r\in \mathbb{Z}_{\geq0},\Bbbk\in\mathbb{Z}_{>0}^{r}\}
\]
on $\{|z|<1\}$ is $\mathbb{C}$-linearly independent, where ${\rm Li}_{\Bbbk}(z)$ are the multiple polylogarithms defined by
\[
{\rm Li}_{k_1,\dots,k_r}(z)=\sum_{0<n_{1}<\cdots<n_{r}}\frac{z^{n_{r}}}{n_{1}^{k_{1}}\cdots n_{r}^{k_{r}}}.
\]
\end{lem}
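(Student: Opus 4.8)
The plan is to prove the equivalent statement that the functions $\mathrm{Li}_{\Bbbk}(z)$ are $\mathbb{C}$-linearly independent as holomorphic functions on the disc $\{|z|<1\}$, by induction on the weight $\mathrm{wt}(\Bbbk)=k_1+\cdots+k_r$. First I would record the two structural facts that make the induction run. Differentiating the defining series term by term gives $\frac{d}{dz}\mathrm{Li}_{k_1,\dots,k_r}(z)=\frac1z\mathrm{Li}_{k_1,\dots,k_{r-1},k_r-1}(z)$ when $k_r\ge 2$, and $\frac{d}{dz}\mathrm{Li}_{k_1,\dots,k_{r-1},1}(z)=\frac{1}{1-z}\mathrm{Li}_{k_1,\dots,k_{r-1}}(z)$ when the last index equals $1$; moreover $\mathrm{Li}_{\emptyset}(z)=1$, while $\mathrm{Li}_{\Bbbk}(z)=O(z^{r})$ near $z=0$ for $r\ge 1$ because $0<n_1<\cdots<n_r$ forces $n_r\ge r$. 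Equivalently, writing $\omega_0=dz/z$ and $\omega_1=dz/(1-z)$, each $\mathrm{Li}_{\Bbbk}$ is the iterated integral from $0$ attached to a word in $\{0,1\}$, differentiation strips the outermost letter, and the derivative carries the prefactor $\omega_0$ or $\omega_1$ according to whether that letter is $0$ (i.e.\ $k_r\ge2$) or $1$ (i.e.\ $k_r=1$).

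Given a finite relation $\sum_{\Bbbk}c_{\Bbbk}\mathrm{Li}_{\Bbbk}(z)=0$, I would first evaluate at $z=0$: every term with $r\ge 1$ vanishes, so the constant term satisfies $c_{\emptyset}=0$. I would then differentiate the relation once and collect the result according to the two prefactors above, obtaining $\frac{G_0(z)}{z}+\frac{G_1(z)}{1-z}=0$, where $G_0$ is the $\mathbb{C}$-linear combination of the weight-lowered polylogarithms coming from the indices with $k_r\ge2$, and $G_1$ is the combination coming from the indices with $k_r=1$. Both $G_0$ and $G_1$ are $\mathbb{C}$-linear combinations of multiple polylogarithms of strictly smaller weight. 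If I can show that this forces $G_0\equiv 0$ and $G_1\equiv 0$ separately, then the induction hypothesis applied to $G_0$ and to $G_1$ makes all the $c_{\Bbbk}$ vanish, completing the induction; the base case (weight $0$) is just $c_{\emptyset}\cdot 1=0$.

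The hard part is exactly this separation: the identity $(1-z)G_0(z)+zG_1(z)=0$ has the non-constant rational coefficients $1-z$ and $z$, so it is not itself a $\mathbb{C}$-linear relation, and the induction hypothesis does not apply to it directly. The mechanism that saves the argument is that the two forms $\omega_0$ and $\omega_1$ have their poles at the distinct points $z=0$ and $z=1$. Concretely, I would strengthen the inductive statement to linear independence over the field $\mathbb{C}(z)$ of rational functions, and then isolate $G_0$ and $G_1$ by examining the local behaviour of the two sides at $z=0$ and at $z=1$ respectively: near $z=0$ the term $G_1/(1-z)$ is holomorphic, so $G_0/z$ must be holomorphic as well and its local data pins down $G_0$, and symmetrically at $z=1$ for $G_1$. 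Equivalently, one can match the Taylor coefficients at $z=0$ of $(1-z)G_0(z)+zG_1(z)=0$ recursively. Either way, the essential input is the distinctness of the singular points $0$ and $1$; this is the only genuinely delicate point, and it is precisely the content of the cited lemma of Panzer, to which I would appeal if I did not wish to reproduce the separation argument in full.
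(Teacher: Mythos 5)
Your outline is correct and ends up in the same place as the paper, which offers no proof of its own for this lemma and simply cites Panzer \cite[Lemma 3.3.5]{Pan15}. Your inductive sketch via the differential equations $\frac{d}{dz}{\rm Li}_{\Bbbk}= \frac{1}{z}{\rm Li}_{\Bbbk^-}$ (for $k_r\ge 2$) and $\frac{1}{1-z}{\rm Li}_{(k_1,\dots,k_{r-1})}$ (for $k_r=1$) is a faithful account of how that cited lemma is actually proved, and you correctly isolate the only delicate point --- separating $G_0$ and $G_1$ in $(1-z)G_0+zG_1=0$, which forces strengthening the induction to independence over $\mathbb{C}(z)$ --- before deferring precisely that step to the same reference the paper relies on.
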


\begin{proof}[Proof of Theorem \ref{main1} (ii)]
 Take $w\in\mathfrak{H}x$ such that $f(y,w)=0$.
 For $N\in\mathbb{Z}_{>0}$, we define a linear map $h_{N}:y\mathfrak{H}\to\mathbb{Q}$
by
\[
h_{N}(yx^{k_{1}-1}\cdots yx^{k_{r}-1})=\sum_{0<n_{1}<\cdots<n_{r}=N}\frac{1}{n_{1}^{k_{1}}\cdots n_{r}^{k_{r}}}.
\]
By the proof of Theorem \ref{main3-1}, there exist two Dirichlet series
\[
L_{1}(w;s)=\sum_{N=1}^{\infty}\frac{c_{1}(w;N)}{N^{s}} \quad\textrm{and}\quad L_{2}(w;s)=\sum_{N=1}^{\infty}\frac{c_{2}(w;N)}{N^{s}}
\]
such that $L_{1}(w;s)$ and $L_{2}(w;s)$ converge for all $s\in\mathbb{Z}_{\geq0}$,
\[
c_{2}(w;N)=\frac{h_{N}(\tau\circ S_{1}^{-1}(w))}{N}
\]
for all $N\in\mathbb{Z}_{>0}$, and
\[
L_{1}(w;s)+sL_{2}(w;s)=0
\]
for all $s\in\mathbb{Z}_{\geq0}$. Furthermore, by Theorem \ref{main3-2}, $c_{2}(w;N)=0$
for all $N\in\mathbb{Z}_{>0}$. Thus
\begin{align*}
 0 & =\sum_{N>0}Nc_{2}(w;N)z^{N}\\
   & =\sum_{r>0}\sum_{\Bbbk\in\mathbb{Z}_{>0}^{r}}n(\Bbbk){\rm Li}_{\Bbbk}(z),
\end{align*}
where $n(k_{1},\dots,k_{r})$ are coefficients of $yx^{k_{1}-1}\cdots yx^{k_{r}-1}$
in $\tau\circ S_{1}^{-1}(w)$. Therefore by Lemma \ref{poincare}, $n(\Bbbk)=0$
for all $\Bbbk$. Now Theorem \ref{main1} (ii) is proved since
\[
w=S_{1}\circ\tau\left(\sum_{\Bbbk}n(\Bbbk)yx^{k_{1}-1}\cdots yx^{k_{r}-1}\right)=0.\qedhere
\]
\end{proof}

\section{Some equivalences of families of relations}
In this section, we prove Theorem \ref{Thm:equivalence} which gives
characterizations of the linear part of Kawashima's relation.
Roughly speaking, $A_1$ is the set of ($\mathbb{Q}$-linear sums of)
the linear part of Kawashima's relation, and $A_5,A_6$ are sets of $\mathbb{Q}$-linear
relations which are extendable to $\mathbb{Q}$-linear relations
between families $\{f(A,B)\mid A\in y\mathfrak{H},B\in\mathfrak{H}x\}$.
Theorem \ref{rem1} is a special case $A_6 \subset A_1$ of Theorem \ref{Thm:equivalence}.
%
\begin{thm} \label{Thm:equivalence}
 The following vector subspaces $A_{1},\ldots,A_{6}$ of $\mathfrak{H}$ are coincident.
 \begin{align*}
  A_{1}&:=\sspan\{\phi(u\ast v)x\mid u,v\in y\mathfrak{H}\},\\
  A_{2}&:=\sspan\{(u\diamond v)x\mid u,v\in y\mathfrak{H}\},\\
  A_{3}&:=\sspan\{yw_{1}(x+y)^{n}w_{2}x-y(x+y)^{n}(\tau(w_{1})\diamond w_{2})x\mid w_{1},w_{2}\in\mathfrak{H},n\in\mathbb{Z}_{\ge0}\},\\
  A_{4}&:=\sspan\{yw_{1}w_{2}x-y(\tau(w_{1})\diamond w_{2})x\mid w_{1},w_{2}\in\mathfrak{H}\},\\
  A_{5}&:=\sspan \biggl\{ \sum_{i=1}^{M} n_{i}A_{i}(x+y)^{s}B_{i} \;\biggl|\;
               s,M\in\mathbb{Z}_{\ge0} ,n_{i}\in\mathbb{Q},A_{i}\in y\mathfrak{H},B_{i}\in\mathfrak{H}x, \\
        &\qquad\qquad\qquad\qquad\qquad\qquad\qquad\qquad\qquad\qquad \sum_{i=1}^{M}n_{i}f(A_{i},B_{i})=0 \biggr\}, \\
  A_{6}&:=\sspan \biggl\{ \sum_{i=1}^{M} n_{i}A_{i} B_{i} \;\biggl|\;
               M\in\mathbb{Z}_{\ge0} ,n_{i}\in\mathbb{Q},A_{i}\in y\mathfrak{H},B_{i}\in\mathfrak{H}x, \\
        &\qquad\qquad\qquad\qquad\qquad\qquad\qquad\qquad\qquad\qquad \sum_{i=1}^{M}n_{i}f(A_{i},B_{i})=0 \biggr\}.
 \end{align*}
\end{thm}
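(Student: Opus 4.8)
The plan is to establish the chain of equalities $A_1 = A_2 = A_3 = A_4 \subseteq A_6 \subseteq A_5 \subseteq A_1$, thereby forcing all six subspaces to coincide. The coincidence $A_1 = A_2 = A_3 = A_4$ is already furnished by Lemma \ref{key1}, so I would only need to adjoin $A_5$ and $A_6$ to that circle of inclusions. Since $A_3$ and $A_4$ differ only by the presence of the interpolating factor $(x+y)^n$, and since Theorem \ref{main1} (i) rewrites $f(yw_1, w_2 x) = f(y, (\tau(w_1)\diamond w_2)x)$, the generators of $A_3$ and $A_4$ are precisely elements whose image under $f(-,-)$ vanishes. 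This is the bridge to $A_5$ and $A_6$.

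First I would prove $A_4 \subseteq A_6$. A generator of $A_4$ has the form $yw_1 w_2 x - y(\tau(w_1)\diamond w_2)x$, which I write as $A_1 B_1 - A_2 B_2$ with $A_1 = yw_1 \in y\mathfrak{H}$, $B_1 = w_2 x$, $A_2 = y$, $B_2 = (\tau(w_1)\diamond w_2)x \in \mathfrak{H}x$ (after checking that $\tau(w_1)\diamond w_2$ lies in $\mathfrak{H}$, so that right-multiplication by $x$ lands in $\mathfrak{H}x$). By Theorem \ref{main1} (i), $f(A_1,B_1) - f(A_2,B_2) = f(yw_1,w_2x) - f(y,(\tau(w_1)\diamond w_2)x) = 0$, so the coefficient vector $(n_1,n_2)=(1,-1)$ satisfies the defining constraint of $A_6$; hence the generator lies in $A_6$. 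Next I would prove $A_3 \subseteq A_5$ by the identical argument, now using the $s$-interpolated generators: the factor $(x+y)^n$ plays the role of $(x+y)^s$, and Theorem \ref{main1} (i) again guarantees the vanishing of the relevant $f$-combination. Combined with the already-known equalities, this gives $A_1 = A_2 = A_3 = A_4 \subseteq A_6$ and $\subseteq A_5$.

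The inclusion $A_6 \subseteq A_5$ is immediate on setting $s = 0$ in the definition of $A_5$, since then $A_i(x+y)^0 B_i = A_i B_i$ and the convergence/vanishing constraint is literally the one defining $A_6$. The crucial remaining inclusion, which I expect to be the main obstacle, is $A_5 \subseteq A_1$: I must show that whenever $\sum_i n_i f(A_i,B_i) = 0$, the element $\sum_i n_i A_i (x+y)^s B_i$ lies in $A_1$ for every fixed $s$. The natural strategy is to invoke Theorem \ref{main1} (i) to rewrite each $f(A_i,B_i)$ in the normal form $f(y, v_i x)$ where $v_i \in \mathfrak{H}$ (reducing to the case $A_i = y$ modulo elements of $A_2$), and then apply Theorem \ref{main1} (ii) — the injectivity statement $f(y,wx)=0 \Leftrightarrow w=0$ — to the combined relation. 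Concretely, $\sum_i n_i f(A_i,B_i)=0$ should collapse, modulo $A_2 = A_1$, to a single $f(y, (\sum_i n_i \tilde v_i) x) = 0$, and injectivity then forces $\sum_i n_i \tilde v_i = 0$ as an element of $\mathfrak{H}$. The delicate point is bookkeeping the $(x+y)^s$ factor through this reduction: by Proposition \ref{x+y} one can slide $(x+y)$ across the $\diamond$-product, so $A_i(x+y)^sB_i$ and its normal form differ by elements of $A_2 = A_1$ uniformly in $s$, and the vanishing of the net coefficient then places $\sum_i n_i A_i(x+y)^sB_i$ in $A_1$. Verifying that this reduction is valid for all $s$ simultaneously, and that the residual terms genuinely land in $A_1$ rather than merely in some larger span, is where the argument requires care.
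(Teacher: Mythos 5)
Your proposal is correct and follows essentially the same route as the paper: $A_1=\cdots=A_4$ from Lemma \ref{key1}, the inclusions $A_4\subset A_6\subset A_5$ from Theorem \ref{main1} (i), and the closing inclusion of $A_5$ back into the circle by normalizing each $f(A_i,B_i)$ to $f(y,(\tau(w_1^{(i)})\diamond w_2^{(i)})x)$, invoking the injectivity of Theorem \ref{main1} (ii) to get $\sum_i n_i\,\tau(w_1^{(i)})\diamond w_2^{(i)}=0$, and absorbing the $(x+y)^s$ factor via the generators of $A_3$ (Proposition \ref{x+y}). The ``delicate point'' you flag at the end is handled exactly as you suspect — each fixed $s$ is treated separately, and the residual terms are literally generators of $A_3$ with $n=s$ — so there is no gap.
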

\begin{proof}
 The equality of $A_{1}$, $A_{2}$, $A_{3}$, and $A_{4}$ was proved in Lemma \ref{key1}.
 The inclusion relation $A_{4}\subset A_{6}\subset A_{5}$ is obvious by Theorem \ref{main1} (i).
 Thus, we need to show $A_{5}\subset A_{3}$.
 Take $M\in\mathbb{Z}_{\ge0},n_{i}\in\mathbb{Q},A_{i}\in y\mathfrak{H}$, and $B_{i}\in\mathfrak{H}x$ such that
 \[
  \sum_{i=1}^{M}n_{i}f(A_{i},B_{i})=0.
 \]
 Put $yw_{1}^{(i)}:=A_{i}$ and $w_{2}^{(i)}x:=B_{i}$.
 Then, by Theorem \ref{main1} (i), we have
 \[
  \sum_{i=1}^{M} n_{i}f(y,(\tau(w_{1}^{(i)})\diamond w_{2}^{(i)})x)=0.
 \]
 Thus, by Theorem \ref{main1} (ii), we have
 \[
  \sum_{i=1}^{M}n_{i}\tau(w_{1}^{(i)})\diamond w_{2}^{(i)}=0.
 \]
 Therefore, we get
 \begin{align*}
  &\sum_{i=1}^{M} n_{i}A_{i}(x+y)^{s}B_{i} \\
  &=\sum_{i=1}^{M} n_{i}\left(yw_{1}^{(i)}(x+y)^{s}w_{2}^{(i)}x-y(x+y)^{s}(\tau(w_{1}^{(i)})\diamond w_{2}^{(i)})x\right)
   \in A_{3}. \qedhere
 \end{align*}
\end{proof}

\section*{Acknowledgements}
The authors would like to thank Professor Masanobu Kaneko for valuable comments.
This work was supported by JSPS KAKENHI Grant Numbers JP18J00982, JP18K13392.


\end{document}